
\documentclass[12pt]{amsart}

\usepackage{amssymb, amscd, txfonts}
\usepackage[all]{xy}
\usepackage{graphicx}

\setlength{\textwidth}{16cm}
\setlength{\oddsidemargin}{0cm}
\setlength{\evensidemargin}{0cm}
\setlength{\topmargin}{0cm}
\setlength{\textheight}{22.5cm}

\numberwithin{equation}{section}

\sloppy

\newtheorem{thm}{Theorem}[section]
\newtheorem{prop}[thm]{Proposition}
\newtheorem{lem}[thm]{Lemma}
\newtheorem{cor}[thm]{Corollary}
\newtheorem{q}[thm]{Question}

\theoremstyle{definition}
\newtheorem{defn}[thm]{Definition}

\theoremstyle{remark}
\newtheorem{rem}[thm]{Remark}

\renewcommand{\hom}{\operatorname{Hom}}
\renewcommand{\ker}{\operatorname{Ker}}

\newcommand{\Z}{\mathbb{Z}}

\newcommand{\R}{\mathbb{R}}
\newcommand{\C}{\mathbb{C}}
\newcommand{\F}{\mathbb{F}}

\DeclareMathOperator{\Ad}{Ad}

\DeclareMathOperator{\coker}{Coker}
\DeclareMathOperator{\im}{Im}

\begin{document}

\title[Torsion functions on moduli spaces]
{Torsion functions on moduli spaces in view of the cluster algebra}
\author[T.~Kitayama]{Takahiro KITAYAMA}
\author[Y.~Terashima]{Yuji Terashima}
\address{Department of Mathematics, Tokyo Institute of Technology,
2-12-1 Ookayama, Meguro-ku, Tokyo 152-8551, Japan}
\email{kitayama@math.titech.ac.jp}
\address{Department of Mathematical and Computing Sciences, Tokyo Institute of
Technology, 2-12-1 Ookayama, Meguro-ku, Tokyo 152-8550, Japan}
\email{tera@is.titech.ac.jp}
\subjclass[2010]{Primary~57M27, Secondary~57Q10}
\keywords{torsion invariant, cluster algebra, representation space}

\begin{abstract}
We introduce non-acyclic $PGL_n(\C)$-torsion of a $3$-manifold with toroidal
boundary as an extension of J.~Porti's $PGL_2(\C)$-torsion, and present an
explicit formula of the $PGL_n(\C)$-torsion of a mapping torus for a surface
with punctures, by using the higher Teichm\"{u}ler theory due to V.~Fock and
A.~Goncharov.
Our formula gives a concrete rational function which represents the torsion
function and comes from a concrete cluster transformation associated with the
mapping class.
\end{abstract}

\maketitle

\section{Introduction}
In the important work \cite{P} J.~Porti introduced non-acyclic
$PGL_2(\C)$-torsion of a $3$-manifold with toroidal boundary, and began to
study the torsion as a function on the moduli space of
$PGL_2(\C)$-representations of the fundamental group. 
In particular, in the case of a mapping torus for the once-punctured torus, he
gave a concrete way to compute the torsion function, by using trace functions.

In this paper we introduce non-acyclic $PGL_n(\C)$-torsion of a $3$-manifold
with toroidal boundary, and present an explicit formula of the
$PGL_n(\C)$-torsion of a mapping torus for a general surface with punctures,
by using the higher Teichm\"{u}ler theory due to V.~Fock and
A.~Goncharov~\cite{FG}.
See Theorems 4.1 and 4.2 for the precise statement of our main theorems.
Our formulas, with methods developed in~\cite{TY, NTY}, give concrete rational
functions which represent the functions induced by twisted Alexander
polynomials and the non-acyclic torsion on components of the
$PGL_n(\C)$-character variety.
The rational functions come from a concrete cluster
transformation~\cite{FZ1,FZ2} associated with the mapping class. 
Moreover, we show that for any pseudo-Anosov mapping class of a surface, the
conjugacy class of a holonomy representation of the mapping torus is contained
in the components.

Other attempts to define non-acyclic $PGL_n(\C)$-torsion and to give formulas in terms of quantities closely related to cluster variables should be remarked.
In \cite{MFP3} P.~Menal-Ferrer and J.~Porti defines non-acyclic $PGL_n(\C)$-torsion of a $3$-manifold by another method, and shows an explicit relationship between its asymptotic behavior on $n$ and the volume of the manifold, extending the result of M\"uller for closed manifolds \cite{Mu}.
In \cite{DG} T.~Dimofte and S.~Garoufalidis defines a series of invariants in terms of the shapes together with the gluing equations of an ideal triangulation of a $3$-manifold, and conjectures that each invariant of the series agree with each term of the asymptotic expansion of the Kashaev invariant of the manifold.
In particular, its first one of the series should conjecturelly give non-acyclic $PGL_2(\C)$-torsion, and they verify this experimentally for a large class of $3$-manifolds.
In \cite{GGZ, GTZ} S.~Garoufalidis, M.~Goerner, D.~P.~Thurston and
C.~K.~Zickert study moduli spaces of higher dimensional representations for a general $3$-manifold
in terms of analogous coordinates to Fock and Goncharov's associated to an ideal triangulation of the manifold itself.
It is interesting to obtain an explicit formula of the $PGL_n(\C)$-torsion for
a general $3$-manifold, with a combination of the above results and our method.

This paper is organized as follows.
In Section $2$, following Fock and Goncharov \cite{FG}, we review cluster
algebras associated to an ideal triangulation of a punctured surface and then
show that the characters of geometric representations of mapping tori are described by the
cluster variables.
Section $3$ is devoted to introduce and study non-acyclic Reidemeister torsion
for higher dimensional representations.
In Section $4$ we prove the main theorems, and demonstrate our
theory with concrete examples.

\subsection*{Acknowledgment}
The authors would like to thank H.~Fuji, K.~Nagao, Y.~Yamaguchi and M.~Yamazaki
for valuable conversations.
The authors also wishes to express their thanks to the anonymous referee
for several useful comments in revising the manuscript.

\section{Character varieties and cluster algebras}

\subsection{Character varieties} \label{subsec_CV}
We begin with reviewing some of the standard facts on character varieties.
See Lubotzky and Magid~\cite{LM} for more details.

Let $S$ be a compact connected oriented surface with $m$ boundary circles.
The group $PGL_n(\C)$ acts on the affine algebraic set
$\hom(\pi_1 S, PGL_n(\C))$ by conjugation.
We denote by $X_{S, n}$ the algebro-geometric quotient of the action, which is
called the \textit{$PGL_n(\C)$-character variety} of $\pi_1 S$.
For a representation $\rho \colon \pi_1 S \to PGL_n(\C)$ we write $\chi_\rho$
for its image by the quotient map and call it the \textit{character} of $\rho$.
We fix representatives $\tilde{\gamma}_1, \dots, \tilde{\gamma}_m \in \pi_1 S$
of the boundary circles of $S$. 
A \textit{framed representation} is a pair of a representation
$\rho \colon \pi_1 S \to PGL_n(\C)$ and Borel subgroups $B_1, \dots, B_m$ of
$PGL_n(\C)$ such that $\rho(\tilde{\gamma}_i) \in B_i$ for all $i$.
The set $\widetilde{\mathcal{X}}_{S, n}$ of framed representations is a closed
subset of the affine algebraic set
$\hom(\pi_1 S, PGL_n(\C)) \times \mathcal{B}^m$, where $\mathcal{B}$ is the
flag variety of $PGL_n(\C)$ parameterizing Borel subgroups.
The $PGL_n(\C)$ acts on $\widetilde{\mathcal{X}}_{S, n}$ by conjugation.
We denote by $\mathcal{X}_{S, n}$ the algebro-geometric quotient of the action,
and for a framed representation $(\rho, B_1, \dots, B_m)$ we write
$\chi_{(\rho, B_1, \dots, B_m)}$ for its image by the quotient map.
Forgetting framings $(B_1, \dots, B_m)$ gives a regular map
$\widetilde{\mathcal{X}}_{S, n} \to \hom(\pi_1 S, PGL_n(\C))$.
We denoted by $\pi \colon \mathcal{X}_{S, n} \to X_{S, n}$ the induced map on
the quotients.

The tangent space $T_{\chi_\rho} X_{S, n}$ is identified with a subspace of the
$1$st twisted group cohomology
$H_{\Ad \circ \rho}^1(\pi_1 S; \mathfrak{pgl}_n(\C))$ by the monomorphism given by
\[ \left. \frac{d \chi_{\rho_t}}{dt} \right|_{t=0} \mapsto
\left[ \gamma \mapsto \left. \frac{d \rho_t(\gamma) \rho_t(\gamma^{-1})}{dt} \right|_{t=0} \right],
\]
where $\rho_0 = \rho$ and $\gamma \in \pi_1 S$ \cite{W}.
It is easily seen that the map
$T_{(\rho, B_1, \dots, B_m)} \widetilde{\mathcal{X}}_{S, n} \to T_\rho \hom(\pi_1, PGL_n(\C))$
is an epimorphism, and so is
$(d \pi)_{\chi_{(\rho, B_1, \dots, B_m)}} \colon T_{\chi_{(\rho, B_1, \dots, B_m)}} \mathcal{X}_{S, n} \to T_{\chi_\rho} X_{S, n}$.

We denote by $\Gamma_S$ the mapping class group of $S$ which is defined to be
the group of isotopy classes of orientation preserving homeomorphisms of $S$,
where these isotopies are understood to fix $\partial S$ pointwise.
For $\varphi \in \Gamma_S$ we write $M_\varphi$ for the mapping torus
$S \times [0, 1] / (x, 1) \sim (\varphi(x), 0)$ of $\varphi$.
A mapping class $\varphi \in \Gamma_S$ induces automorphisms $\varphi^*$ on
$\mathcal{X}_{S, n}$ and $X_{S, n}$ by pullback of representations.
For a representation
$\rho \colon \pi_1 M_\varphi \to PGL_n(\C)$, $\chi_{\rho|_{\pi_1 S}}$ is
contained in the fixed point set $X_{S, n}^{\varphi^*}$ of
$\varphi^* \colon X_{S, n} \to X_{S, n}$.

\subsection{Cluster algebras associated to an ideal triangulation}

We review cluster algebras for $S$, following \cite{FG}.
Here, in particular, we only consider \textit{$y$-variables}.
See \cite{FZ1, FZ2} for more details on cluster algebras.
In the following we assume that $\partial S$ is non-empty and that if the genus
of $S$ is $0$, then the number $m$ of the boundary circles is greater than $3$.


Let $Q$ be a quiver with the vertex set $I = \{ 1, 2, \dots, l \}$ and without
loops and oriented $2$-cycles.
For $i, j \in I$ we set
\[ \epsilon_{ij} := \sharp \{ \text{oriented edges from $i$ to $j$} \}
-\sharp \{ \text{oriented edges from $j$ to $i$} \}. \]
Note that $Q$ is uniquely determined by the skew-symmetric matrix
$\epsilon_{ij}$.
For $k \in I$ the \textit{mutation} $\mu_k Q$ at $k \in I$ is defined by the
following matrix $\epsilon_{ij}'$:
\[ \epsilon_{ij}' =
\begin{cases}
-\epsilon_{ij} &\text{if $k \in \{ i, j \}$}, \\
\epsilon_{ij}
+\frac{|\epsilon_{ik}| \epsilon_{kj} + \epsilon_{ik} |\epsilon_{kj}|}{2}
&\text{if $k \notin \{ i, j \}$}.
\end{cases}
\]
A complex torus
\[ \mathcal{X}_Q := (\C^*)^I \]
is associated to $Q$.
Let $(y_1, \dots, y_l)$ be the standard coordinates on the torus.
For $k \in I$ a rational map
$(\mu_k)_* \colon \mathcal{X}_Q \to \mathcal{X}_{\mu_k Q}$ associated to the
mutation $\mu_k Q$ is defined by the following:
\begin{align*}
\text{the $i$th coordinate of } (\mu_k)_*(y_1, \dots, y_l) &=
\begin{cases}
y_i^{-1} & \text{if $i=k$}, \\
y_i (1+y_k^{-1})^{-\epsilon_{ik}}
&\text{if $i \neq k$ and $\epsilon_{ik} \geq 0$}, \\
y_i (1+y_k)^{-\epsilon_{ik}} & \text{if $i \neq k$ and $\epsilon_{ik} \leq 0$}.
\end{cases}
\end{align*}

Shrinking each component of $\partial S$, we get a closed surface
$\overline{S}$ with marked points.
A triangulation of $\overline{S}$ with vertices at the marked points is called
an \textit{ideal triangulation} of $S$.
In this paper we only consider an ideal triangulation without self-folded
edges.
Such a triangulation exists under the above assumption on $S$.  

Let $T$ be an ideal triangulation of $S$ and let $n$ be a positive integer.
We identify each triangle of $T$ with the triangle
\[ x+y+z=n, \quad x, y, z > 0 \]
and consider its triangulation given by the lines $x=p$, $y=p$, $z=p$ where
$0 \leq p \leq n$ is an integer.
The subtriangulation $T_n$ of $T$ is called the \textit{$n$-triangulation} of
$T$. 
The quiver $Q_{T, n}$ associated to $T_n$ is defined as:
\[ Q_{T, n} := \overline{T_n^{(1)} \setminus T^{(1)}}, \]
where $T^{(1)}$ and $T_n^{(1)}$ are the $1$-skeletons of $T$ and $T_n$
respectively.
(See Figure \ref{fig_quiver}.)
The vertex set $I_{T, n}$ of $Q_{T, n}$ consists of vertices of $T_n$ except
the marked points of $\overline{S}$.
The orientation of each edge of $Q_{T, n}$ is provided by that of $S$ as
follows.
Take a triangle $\Delta$ of $T$, which is oriented as a subspace of $S$.
Then each edge of $Q_{T, n}$ contained in $\Delta$ is oriented so that the
direction is parallel to one of the boundary edge of $\Delta$. 
For simplicity of notation, we set
$\mathcal{X}_{T, n} := \mathcal{X}_{Q_{T, n}}$.
Writing $e$ and $f$ for the number of edges and faces of $T$ respectively, we have 
\begin{align*}
|I_{T, n}| &= (n-1)e + \frac{(n-1)(n-2)}{2} f, \\
\chi(S) &= -e+f, \\
2e &= 3f.
\end{align*}
These imply the formula
\[ \dim \mathcal{X}_{T, n} = |I_{T, n}| = - (n^2 - 1) \chi(S). \]
In the following we set
\[ l = - (n^2 - 1) \chi(S). \]

\begin{figure}[h]
\centering
\includegraphics[width=12cm, clip]{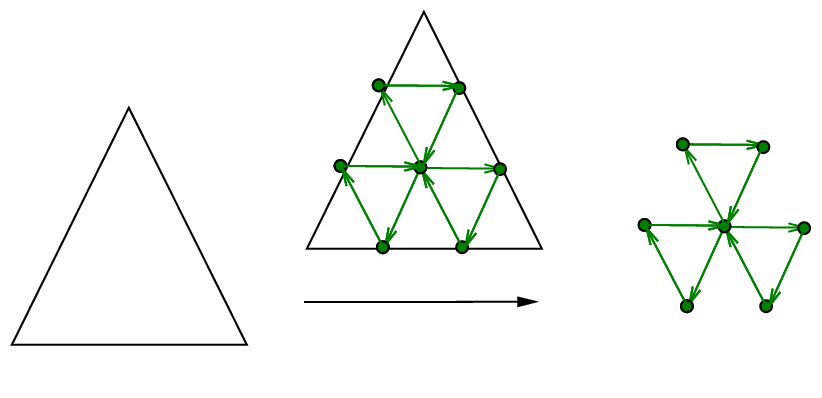}
\caption{The $3$-triangulation $T_3$ and the quiver $Q_{T, 3}$}
\label{fig_quiver}
\end{figure}

Fock and Goncharov~\cite[Section 9]{FG} constructed a regular map
$\nu_T \colon \mathcal{X}_{T, n} \to \mathcal{X}_{S, n}$ and a rational map
$\varphi_T \colon \mathcal{X}_{S, n} \to \mathcal{X}_{T, n}$ such that
$\varphi_T \circ \nu_T = id$.
In particular, $\nu_T$ is an embedding, and \cite[Theorem 9.1]{FG} implies that
the images for all the triangulations cover $\mathcal{X}_{S, n}$.
The regular map $\nu_T \colon \mathcal{X}_{T, n} \to \mathcal{X}_{S, n}$ is explicitly constructed in \cite[Section 9.10]{FG}, and the rational map $\varphi_T \colon \mathcal{X}_{S, n} \to \mathcal{X}_{T, n}$ is in \cite[Section 9.3]{FG}.
For $(y_1, \dots, y_l) \in \mathcal{X}_{T, n}$ we set
\[ \chi_{(y_1, \dots, y_l)} := \pi \circ \nu_T((y_1, \dots, y_l)). \]

\begin{rem}
Fock and Goncharov associated cluster algebras also to an ideal triangulation with self-folded edges.
See \cite[Section 10.7]{FG} for the treatment of the case.
\end{rem}

\subsection{The mapping class group actions on cluster variables}

Here we define the action $\varphi^* \colon \mathcal{X}_{T, n} \to \mathcal{X}_{T, n}$ for each $\varphi \in \Gamma_S$.
The definition plays important role to relate the moduli space for $M_\varphi$ to cluster algebras for the fiber surface.
In fact the fixed point set $\mathcal{X}_{T, n}^{\varphi^*}$ parameterizing $\mathcal{X}_{S, n}^{\varphi^*}$ makes sense.

A mapping class $\varphi \in \Gamma_S$ naturally induces a bijection
$\varphi_* \colon I_{T, n} \to I_{\varphi(T), n}$.
It defines an isomorphism
$\sigma \colon \mathcal{X}_{\varphi(T), n} \to \mathcal{X}_{T, n}$ by
\[ \sigma(y_{\varphi_*(1)}, \dots, y_{\varphi_*(l)}) = (y_1, \dots, y_l). \]
The isomorphism $\sigma$ is called the \textit{labeling change} of $\varphi$.
This is essential for obtaining the \textit{genuine} action $\varphi^* \colon \mathcal{X}_{T, n} \to \mathcal{X}_{T, n}$ defined later.

\begin{prop} \label{prop_C}
For $\varphi \in \Gamma_S$ the following diagram commutes: 
\[
\begin{CD}
\mathcal{X}_{\varphi(T), n} @>\sigma>> \mathcal{X}_{T, n} \\
@V \nu_{\varphi(T)} VV @V \nu_T VV \\
\mathcal{X}_{S, n} @>\varphi^*>> \mathcal{X}_{S, n}.
\end{CD}
\]
\end{prop}

\begin{proof}
We first briefly overview the flow of the construction of the map $\nu_T \colon \mathcal{X}_{T, n} \to \mathcal{X}_{S, n}$.
Let $\Gamma$ be the $1$-skeleton of a dual complex of $T$.
Replacing edges and vertices of $\Gamma$ by rectangles and hexagons respectively, we obtain a decomposition of $S$.
The orientation of $S$ naturally induces that of each edge of the decomposition.
We denote by $\Delta$ the set of oriented edges of the decomposition.  
It follows from \cite[Lemma 9.6]{FG} that $\mathcal{X}_{S, n}$ can be regarded as a quotient of $PGL_n(\C)^\Delta$.
For $(y_1, \dots, y_l) \in \mathcal{X}_{T, n}$ a representative of $\nu_T(y_1, \dots, y_l)$ in $PGL_n(\C)^\Delta$ is explicitly given as in \cite[Theorem 9.2]{FG}.

Let $e \in \Delta$. 
If $e$ is an edge of a rectangle, then let $v_1, \dots, v_q$ be the vertices of $Q_{T, n}$ on the two triangles sharing the edge corresponding with the rectangle.
If $e$ is an edge of a hexagon, then let $v_1, \dots, v_q$ be the vertices of $Q_{T, n}$ on the triangle corresponding with the hexagon.
It follows from the construction~\cite[Theorem 9.2]{FG} of
$\nu_T \colon \mathcal{X}_{T, n} \to \mathcal{X}_{S, n}$ that for
$(y_1, \dots, y_l) \in \mathcal{X}_{T, n}$, $\nu_T(y_1, \dots, y_l)$ is presented by an element of
$PGL_n(\C)^\Delta$ whose image of $e$ is determined only by the coordinates
$(y_{v_1}, \dots, y_{v_q})$ in $(y_1, \dots, y_l)$, and that $\nu_{\varphi(T)}(\sigma^{-1}(y_1, \dots, y_l))$
is represented by one whose image of $\varphi(e)$ is similarly determined by
$(y_{\varphi(v_1)}, \dots, y_{\varphi(v_q)})$ in $\sigma^{-1}(y_1, \dots, y_l)$, which are
equal to $(y_{v_1}, \dots, y_{v_q})$ in $(y_1, \dots, y_l)$.
Therefore
\[ \varphi^* \circ \nu_{\varphi(T)} \circ \sigma^{-1}(y_1, \dots, y_l) = \nu_T(y_1, \dots, y_l) \]
for any $(y_1, \dots, y_l) \in \mathcal{X}_{T, n}$, and the lemma follows.
\end{proof}

Let $T'$ be an ideal triangulation of $S$ obtained from $T$ by a flip $f$ at an
edge $e$.
We identify each of two triangles sharing $e$ as a face with the triangle
\[ x+y+z=n, \quad x, y, z > 0 \]
so that the edge on the line $x=0$ represents $e$.
Let $v^0_1, \dots, v^0_{n-2}$ be the vertices of $Q_{T, n}$ on the line
$x = 0$, and let $v^i_1, \dots, v^i_{n-i-2}$ and $w^i_1, \dots, w^i_{n-i-2}$ be
these on the line $x=i$ contained in the interior of the two triangles for
$1 \leq i \leq n-2$.
Then the following composition of mutations change $Q_{T, n}$ into $Q_{T', n}$
~\cite[Proposition 10.1]{FG}:
\[ \mu^{n-2} \circ \dots \circ \mu^0, \]
where
\begin{align*}
\mu^0 &:= \mu_{v^0_1} \circ \dots \circ \mu_{v^0_{n-1}}, \\
\mu^i &:= (\mu_{v^i_1} \circ \dots \circ \mu_{v^i_{n-i-1}}) \circ (\mu_{w^i_1} \circ \dots \circ \mu_{w^i_{n-i-1}}), \quad \text{for $1 \leq i \leq n-2$}.
\end{align*}
A rational map $f_* \colon \mathcal{X}_{T, n} \to \mathcal{X}_{T', n}$ is
defined as
\[ f_* := (\mu^{n-2})_* \circ \dots \circ (\mu^0)_*. \]
The following commutative diagram is proved in \cite[Sections 10.5 and 10.6]{FG}:
\[ \xymatrix{
\mathcal{X}_{T, n} \ar[rr]^{f_*} \ar[dr]_{\nu_T} & & \mathcal{X}_{T', n} \ar[dl]^{\nu_{T'}} \\
& \mathcal{X}_{S, n} & 
} \]

\begin{defn}
We take a sequence $f_1, \dots, f_q$ of flips changing $T$ into $\varphi(T)$
and define a rational map
$\varphi \colon \mathcal{X}_{T, n} \to \mathcal{X}_{T, n}$ as
\[ \varphi^* = \sigma \circ (f_q)_* \circ \dots \circ (f_1)_*, \]
where $\sigma$ is the labeling change of $\varphi$.
\end{defn}

The following is now a direct consequence of Proposition \ref{prop_C}.

\begin{cor} \label{cor_equivariance}
For $\varphi \in \Gamma_S$ the following diagram commutes: 
\[
\begin{CD}
\mathcal{X}_{T, n} @>\varphi^*>> \mathcal{X}_{T, n} \\
@V \nu_T VV @V \nu_T VV \\
\mathcal{X}_{S, n} @>\varphi^*>> \mathcal{X}_{S, n}.
\end{CD}
\]
\end{cor}

Note that it follows from the above corollary that
$\varphi^* \colon \mathcal{X}_{T, n} \to \mathcal{X}_{T, n}$ does not depend on
the choice of a sequence of flips.

\subsection{The character of a holonomy representation}

We show that the characters of geometric representations of mapping tori are
described by cluster variables.

It is well-known that for $\varphi \in \Gamma_S$ the mapping torus $M_\varphi$
has a hyperbolic structure if and only if $\varphi$ is pseudo-Anosov~\cite{Th}.

\begin{thm} \label{thm_holonomy}
Let $\varphi \in \Gamma_S$ be pseudo-Anosov and
$\rho \colon \pi_1 M_\varphi \to PGL_2(\C)$ a holonomy representation of
$M_\varphi$.
Then there exists $y_i \in \C^*$ for $i = 1, \dots, l$ such that
$\chi_{\rho|_{\pi_1 S}} = \chi_{(y_1, \dots, y_l)}$.
\end{thm}

\begin{proof}
Since for any representative $\tilde{\gamma} \in \pi_1 S$ of a boundary circle
of $S$ a Borel subgroup containing $\rho(\tilde{\gamma})$ is uniquely
determined, $\pi^{-1}(\chi_{\rho|_{\pi_1 S}})$ consists of one point
$\chi_{(\rho|_{\pi_1 S}, B_1, \dots, B_m)}$.
It suffices to show that the rational map
$\varphi_T \colon \mathcal{X}_{S, 2} \to \mathcal{X}_{T, 2}$ is defined on the
point, since, if so, then
$\varphi_T(\chi_{(\rho|_{\pi_1 S}, B_1, \dots, B_m)}) \in \mathcal{X}_{T, 2}$
satisfies the desired condition.

Let $e$ be an edge of $T$ and let $\Gamma$ be the $1$-skeleton of a dual
complex of $T$.
Write $x, y, z, t$ for the vertices of two triangles of $T$ sharing $e$ so that
$xtz$ and $xzy$ are the triangles compatible with the orientations coming from
that of $S$.
There are natural $4$ (unoriented) loops
$\gamma_x, \gamma_y, \gamma_z, \gamma_t$ in $\Gamma$ starting at a point on the
dual edge of $e$ and going around the boundary of the dual cells of the
vertices $x, y, z, t$ respectively.
Take representatives
$\tilde{\gamma}_x, \tilde{\gamma}_y, \tilde{\gamma}_z, \tilde{\gamma}_t \in \pi_1 S$
of $\gamma_x, \gamma_y, \gamma_z, \gamma_t$ with any orientations respectively, and let
$\lambda_x, \lambda_y, \lambda_z, \lambda_t \in \C P^1$ be the fixed point of
the M\"{o}bius transformations
$\rho(\tilde{\gamma}_x), \rho(\tilde{\gamma}_y), \rho(\tilde{\gamma}_z), \rho(\tilde{\gamma}_t)$
respectively.
Then, if defined, the coordinate $y_e$ of
$\varphi_T(\chi_{(\rho|_{\pi_1 S}, B_1, \dots, B_m)})$ corresponding to the
vertex on $e$ is given by
\[ y_e = \frac{(\lambda_x - \lambda_t)(\lambda_y - \lambda_z)}{(\lambda_z - \lambda_t)(\lambda_x - \lambda_y)}. \]
See \cite[Sections 9.3 and 9.5]{FG} for the definition of the coordinate
functions.
Since $\tilde{\gamma}_x, \tilde{\gamma}_y, \tilde{\gamma}_z, \tilde{\gamma}_t$
are distinct nontrivial elements of the free group $\pi_1 S$ and since
$\rho \colon \pi_1 M_\varphi \to PGL_n(\C)$ is faithful,
$\rho(\tilde{\gamma}_x), \rho(\tilde{\gamma}_y), \rho(\tilde{\gamma}_z), \rho(\tilde{\gamma}_t)$
are non-commutative with each other, and so
$\lambda_x, \lambda_y, \lambda_z, \lambda_t$ are all distinct elements. 
Therefore the value $y_e$ is nonzero for each $e$, which implies that
$\varphi_T \colon \mathcal{X}_{S, 2} \to \mathcal{X}_{T, 2}$ is defined on
$\chi_{(\rho|_{\pi_1 S}, B_1, \dots, B_m)}$.
\end{proof}

\begin{cor} \label{cor_solution}
For any pseudo-Anosov mapping class $\varphi \in \Gamma_S$ the fixed point set
$\mathcal{X}_{T, n}^{\varphi^*}$ is nonempty.
\end{cor}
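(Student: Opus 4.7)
The plan is to notice that the point in $\mathcal{X}_{T,2}$ produced by Theorem \ref{thm_holonomy} is automatically $\varphi^*$-fixed, and then to transport it to general $n$ via the principal embedding.

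First, for $n=2$, I would take $(y_1,\dots,y_l)\in\mathcal{X}_{T,2}$ from Theorem \ref{thm_holonomy}, so that $\nu_T(y_1,\dots,y_l)=\chi_{(\rho|_{\pi_1 S},B_1,\dots,B_m)}$, where the framing is the unique one compatible with $\rho$, as shown in the proof of Theorem \ref{thm_holonomy}. Letting $t\in\pi_1 M_\varphi$ denote the monodromy generator, one has $\rho(t)\rho(\gamma)\rho(t)^{-1}=\rho(\varphi_*\gamma)$ for all $\gamma\in\pi_1 S$, so $\rho|_{\pi_1 S}\circ\varphi_*$ is conjugate to $\rho|_{\pi_1 S}$ by $\rho(t)$. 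Because the $B_i$ are forced by $\rho$, they transform compatibly under this conjugation, and therefore the framed character lies in $\mathcal{X}_{S,2}^{\varphi^*}$. Theorem \ref{thm_equivariance} together with the injectivity of $\nu_T$ then forces $\varphi^*(y_1,\dots,y_l)=(y_1,\dots,y_l)$.

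For general $n$, I would post-compose $\rho$ with the principal embedding $\iota\colon PGL_2(\C)\hookrightarrow PGL_n(\C)$ coming from the irreducible $n$-dimensional representation of $SL_2(\C)$. Since $\iota$ sends each Borel of $PGL_2(\C)$ into a unique Borel of $PGL_n(\C)$, the framings $\iota(B_i)$ are determined by $\iota\circ\rho$; the conjugation argument above then shows that $\chi_{(\iota\circ\rho|_{\pi_1 S},\iota(B_1),\dots,\iota(B_m))}\in\mathcal{X}_{S,n}^{\varphi^*}$. Applying the rational map $\varphi_T\colon\mathcal{X}_{S,n}\to\mathcal{X}_{T,n}$ at this framed character and invoking Theorem \ref{thm_equivariance} produces the desired fixed point of $\varphi^*$ on $\mathcal{X}_{T,n}$.

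The main obstacle is verifying that $\varphi_T$ is defined at that framed character—equivalently, that all Fock--Goncharov cluster coordinates are finite and nonzero there. As in the proof of Theorem \ref{thm_holonomy}, this reduces to showing that the flags in $\C^n$ attached to the holonomies of small loops around the marked points are in sufficiently general position for every triangle and every edge of $T$. Faithfulness of $\rho$ prevents coincidences among the boundary fixed points on $\C P^1$, and the principal embedding preserves this genericity, so the verification goes through along the same lines as in Theorem \ref{thm_holonomy}.
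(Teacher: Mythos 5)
Your $n=2$ step matches the paper: both arguments use the uniqueness of the framing and Theorem~\ref{thm_equivariance} to conclude that the $(y_1,\dots,y_l)\in\mathcal{X}_{T,2}$ produced by Theorem~\ref{thm_holonomy} is $\varphi^*$-fixed. Where you genuinely diverge is the passage from $n=2$ to general $n$. The paper stays entirely on the cluster-torus side: it defines a combinatorial section $\iota_n\colon\mathcal{X}_{T,2}\to\mathcal{X}_{T,n}$ by copying the edge coordinate of $Q_{T,2}$ to every vertex of $Q_{T,n}$ lying on that edge of $T$ and setting all interior coordinates to $1$, then checks (directly from the mutation formulas defining $\varphi^*$) that $\iota_n$ intertwines $\varphi^*$ on $\mathcal{X}_{T,2}$ with $\varphi^*$ on $\mathcal{X}_{T,n}$. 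This transports the fixed point with no reference to representations or to the rational map $\varphi_T$. You instead go through the moduli side by post-composing with the principal embedding $\iota\colon PGL_2(\C)\hookrightarrow PGL_n(\C)$ and applying $\varphi_T$ to the resulting framed character.

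The cost of your route is exactly the obstacle you flag at the end, and you leave it essentially unaddressed: you must show $\varphi_T$ is defined at $\chi_{(\iota\circ\rho|_{\pi_1 S},\iota(B_1),\dots,\iota(B_m))}$, i.e.\ that all Fock--Goncharov coordinates --- the triple ratios at interior lattice points and the edge invariants --- are finite and nonzero for the osculating flags along the rational normal curve attached to distinct points of $\C P^1$. Your remark that this ``goes through along the same lines as in Theorem \ref{thm_holonomy}'' is not accurate: that proof is specific to $n=2$, where the coordinates are scalar cross-ratios of four distinct points in $\C P^1$ and the check is immediate. For $n>2$ one needs the separate fact that osculating flags at distinct parameters are in general position (a Vandermonde-type computation); in fact, for such Veronese flags the triple ratios are identically $1$ and the edge invariants reduce to the $PGL_2$ cross-ratios, which is precisely what the paper's $\iota_n$ encodes combinatorially. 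So your approach is salvageable, but as written it has a real gap at the point you yourself identify; the paper's $\iota_n$ is designed exactly to avoid having to prove this genericity statement on the flag side.
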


\begin{proof}
Let $\iota_n \colon \mathcal{X}_{T, 2} \to \mathcal{X}_{T, n}$ be the map
defined as follows.
For $(y_1, \dots, y_l) \in \mathcal{X}_{T, 2}$, each coordinate of
$\iota_n(y_1, \dots, y_l)$ corresponding to a vertex of $Q_{T, n}$ on an edge
of $T$ is defined to be $y_i$ corresponding to the unique vertex of $Q_{T, 2}$
on the same edge, and the other coordinates are all defined to be $1$.
The commutativity of the following diagram is straightforward by the definition
of $\varphi^*$:
\[
\begin{CD}
\mathcal{X}_{T, 2} @>\varphi^*>> \mathcal{X}_{T, 2} \\
@V \iota_n VV @V \iota_n VV \\
\mathcal{X}_{T, n} @>\varphi^*>> \mathcal{X}_{T, n}.
\end{CD}
\]
It follows from this commutativity, Corollary \ref{cor_equivariance} and
Theorem \ref{thm_holonomy} that for $(y_1, \dots, y_l) \in \mathcal{X}_{T, 2}$ in
Theorem \ref{thm_holonomy},
$\iota_n(y_1, \dots, y_l) \in \mathcal{X}_{T, n}^{\varphi^*}$, which proves the
corollary.
\end{proof}

\section{Torsion functions}

\subsection{Reidemeister torsion}

First we review basics of Reidemeister torsion.
See Milnor~\cite{M1} and Turaev~\cite{Tu} for more details.

Let $C_* = (C_n \xrightarrow{\partial_n} C_{n-1} \to \cdots \to C_0)$ be a
finite dimensional chain complex over a commutative field $\F$, and let
$c = \{ c_i \}$ and $h = \{ h_i \}$ be bases of $C_*$ and $H_*(C_*)$
respectively.
Choose bases $b_i$ of $\im \partial_{i+1}$ for each $i = 0, 1, \dots n$, and
take a basis $b_i h_i b_{i-1}$ of $C_i$ for each $i$ as follows.
Picking a lift of $h_i$ in $\ker \partial_i$ and combining it with $b_i$, we
first obtain a basis $b_i h_i$ of $C_i$.
Then picking a lift of $b_{i-1}$ in $C_i$ and combining it with $b_i h_i$, we
obtain a basis $b_i h_i b_{i-1}$ of $C_i$.
The \textit{algebraic torsion} $\tau(C_*, c, h)$ is defined as:
\[ \tau(C_*, c, h) := \prod_{i=0}^n
[b_i h_i b_{i-1} / c_i]^{(-1)^{i+1}} ~\in \F^\times, \]
where $[d' / d]$ is the determinant of the base change matrix
from $d$ to $d'$ for bases $d$ and $d'$.
If $C_*$ is acyclic, then we just write $\tau(C_*, c)$.
It can be easily checked that $\tau(C_*, c, h)$ does not depend on the choices
of $b_i$ and $b_i h_i b_{i-1}$.

The algebraic torsion $\tau$ has the following multiplicative property.
Let
\[ 0 \to C_*' \to C_* \to C_*'' \to 0 \]
be a short exact sequence of finite dimensional chain complexes over $\F$ and
let $c = \{ c_i \}, c' = \{ c_i' \}, c'' = \{ c_i'' \}$ and
$h = \{ h_i \}, h' = \{ h_i' \}, h'' = \{ h_i'' \}$ be bases of
$C_*, C_*', C_*''$ and $H_*(C_*), H_*(C_*'), H_*(C_*'')$.
Picking a lift of $c_i''$ in $C_i$ and combining it with the image of $c_i'$ in
$C_i$, we obtain a basis $c_i' c_i''$ of $C_i$.
We denote by $\mathcal{H}_*$ the corresponding long exact sequence in homology,
and by $d$ the basis of $\mathcal{H}_*$ obtained by combining $h, h', h''$.

\begin{lem}(\cite[Theorem 3.\ 1]{M1}) \label{lem_M}
If $[c_i' c_i'' / c_i] = 1$ for all $i$, then
\[ \tau(C_*, c, h) = \tau(C_*', c', h') \tau(C_*'', c'', h'')
\tau(\mathcal{H}_*, d). \]
\end{lem}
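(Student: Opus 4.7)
The plan is to follow the classical argument of Milnor, reducing the statement to a direct base-change computation. Since the hypothesis $[c_i' c_i''/c_i]=1$ allows us to replace $c_i$ by $c_i' c_i''$ without altering any of the determinants appearing in the definition of $\tau$, I may fix a set-theoretic splitting $C_i\cong C_i'\oplus C_i''$ in which the chosen basis is literally $c_i' c_i''$.

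The next step is to produce, inside $C_*$, bases of $\im\partial_{i+1}$, $\ker\partial_i$ and lifts of homology classes in a way compatible with the splitting and with the analogous data already chosen in $C_*'$ and $C_*''$. For the subcomplex $C_*'$ one uses $b_i'$ and (lifts of) $h_i'$ directly. For the quotient $C_*''$, pick set-theoretic lifts $\tilde b_i''$ of $b_i''$ and $\tilde h_i''$ of $h_i''$ through the splitting. These lifts need not be boundaries or cycles in $C_*$; their failures lie respectively in $B_{i-1}'$ and in $Z_{i-1}'$, and in the latter case the discrepancy represents precisely the value of the connecting homomorphism $\delta\colon H_i(C_*'')\to H_{i-1}(C_*')$. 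This is how the long exact sequence $\mathcal{H}_*$ enters the picture, and how one builds, simultaneously, the honest basis $b_i h_i b_{i-1}$ of $C_i$ and the basis $d$ of $\mathcal{H}_*$ induced from $h, h', h''$.

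With these choices the basis $b_i h_i b_{i-1}$ of $C_i$ differs from $c_i' c_i''$ by a block lower-triangular matrix in the splitting. Its diagonal blocks are exactly the base-change matrices defining $\tau(C_*', c', h')$ and $\tau(C_*'', c'', h'')$ at degree $i$, while the off-diagonal corrections encode the failures of the lifts $\tilde b_i''$ and $\tilde h_i''$ described above. Taking the alternating product $\prod_i[\,\cdot\,]^{(-1)^{i+1}}$, the diagonal contributions immediately yield $\tau(C_*',c',h')\,\tau(C_*'',c'',h'')$, and the off-diagonal corrections, once identified with the compatible basis $d$ of $\mathcal{H}_*$, telescope into $\tau(\mathcal{H}_*,d)$.

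The hard part will be precisely this last bookkeeping: one must check that the corrections appearing at each degree $i$, weighted by the signs $(-1)^{i+1}$, reassemble exactly into the alternating product defining $\tau(\mathcal{H}_*,d)$ relative to the basis $d$. The rest of the argument is essentially linear algebra on a split short exact sequence; the sign-matching between the connecting-map contributions in $C_*$ and those in $\mathcal{H}_*$ is the only place where genuine care is needed, and is the substance of Milnor's original proof that we are invoking.
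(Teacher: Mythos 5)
The paper does not prove this lemma: it is quoted directly from Milnor's \emph{Whitehead torsion} (Theorem~3.1 there) and used as a black box, so there is no ``paper's own proof'' to compare against. Evaluated on its own, your sketch contains a genuine conceptual error that makes the argument collapse as stated.

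You claim that, with suitable compatible choices, the base-change matrix from $c_i'c_i''$ to $b_i h_i b_{i-1}$ is block lower-triangular in the splitting $C_i\cong C_i'\oplus C_i''$, with diagonal blocks \emph{exactly} the base-change matrices computing $\tau(C_*',c',h')$ and $\tau(C_*'',c'',h'')$, and that the off-diagonal blocks then ``telescope into $\tau(\mathcal{H}_*,d)$.'' But the determinant of a block lower-triangular matrix is the product of the determinants of its diagonal blocks and is completely insensitive to the off-diagonal blocks. If your description were correct, the alternating product would give $\tau(C_*,c,h)=\tau(C_*',c',h')\,\tau(C_*'',c'',h'')$ with no $\tau(\mathcal{H}_*,d)$ factor at all, contradicting the statement whenever $\tau(\mathcal{H}_*,d)\ne 1$. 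So the mechanism you propose for producing the $\tau(\mathcal{H}_*)$ term cannot work.

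The root of the problem is that the collection $b_i' h_i' b_{i-1}'\cup\tilde b_i''\tilde h_i''\tilde b_{i-1}''$, while indeed a basis of $C_i$ (and indeed block lower-triangular against $c_i'c_i''$), is generally \emph{not} of the required form $b_i h_i b_{i-1}$ for $C_*$: the subspaces $B_i=\im\partial_{i+1}$, $Z_i=\ker\partial_i$ and $H_i(C_*)$ do not split as $B_i'\oplus(\text{lift of }B_i'')$, etc., precisely because the connecting homomorphism is nonzero in general (e.g.\ $C_*'=(\F\ \text{in degree }0)$, $C_*''=(\F\ \text{in degree }1)$, $C_*=(\F\xrightarrow{\ \cong\ }\F)$ has $\dim B_0=1$ while $\dim B_0'+\dim B_0''=0$). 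Consequently the ranks of the three pieces $b_i$, $h_i$, $b_{i-1}$ differ from the sums of the corresponding ranks in $C_*'$ and $C_*''$. Milnor's proof handles exactly this: one must choose bases of $B_i$, $Z_i$, and lifts of homology threaded through the images and kernels of the maps of $\mathcal{H}_*$, and the factor $\tau(\mathcal{H}_*,d)$ arises from the base-change determinants forced by those refined choices, not as an ``off-diagonal correction.'' Your sketch identifies the right actors (the connecting map, the compatible basis $d$) but misdescribes how they enter, so the bookkeeping you defer is not merely tedious---as written, it cannot produce the claimed factor.
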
 

In the following when we write $C_*(\widetilde{Y}, \widetilde{Z})$ for a
CW-pair $(Y, Z)$, $\widetilde{Y}$, $\widetilde{Z}$ stand for the universal
cover of $Y$ and the pullback of $Z$ by the universal covering map
$\widetilde{Y} \to Y$ respectively.
For a $n$-dimensional representation $\rho \colon \pi_1 Y \to GL(V)$ over a
commutative field $\F$ we define the twisted homology group and the cohomology
group associated to $\rho$ as follows:
\begin{align*}
H_i^\rho(Y, Z; V) &:=
H_i(C_*(\widetilde{Y}, \widetilde{Z}) \otimes_{\Z[\pi_1 Y]} V), \\
H_\rho^i(Y, Z; V) &:=
H^i(\hom_{\Z[\pi_1 Y]}(C_*(\widetilde{Y}, \widetilde{Z}), V)).
\end{align*}
If $Z$ is empty, then we write $H_i^\rho(Y; V)$ and $H_\rho^i(Y; V)$
respectively.

For a basis $h$ of $H_*^\rho(Y; V)$ the \textit{Reidemeister torsion}
$\tau_\rho(Y; h)$ associated to $\rho$ and $h$ is defined as follows:
We choose a lift $\tilde{e}$ in $\widetilde{Y}$ for each cell
$e \subset Y$.
Then
\[ \tau_\rho(Y; h) :=
\tau(C_*(\widetilde{Y}) \otimes_{\Z[\pi_1 Y]} V,
\langle \tilde{e} \otimes 1 \rangle_e, h) ~\in
\F^\times / (-1)^n \det \rho(\pi_1 Y). \]
If  $H_*^\rho(Y; V) = 0$, then we drop $h$ in the notation $\tau_\rho(Y; h)$.
It can be easily checked that $\tau_\rho(Y; h)$ does not depend on the
choice of $\tilde{e}$ and is invariant under conjugation of representations.
It is known that Reidemeister torsion is a simple homotopy invariant.

Let $M$ be a compact connected orientable $3$-manifold with empty or toroidal
boundary and let $\psi \colon \pi_1 M \to \langle t \rangle$ be a homomorphism.
For a representation $\rho \colon \pi_1 Y \to GL_n(\F)$ satisfying
$H_*^{\psi \otimes \rho}(Y; \F(t)^n) = 0$, where
$\psi \otimes \rho \colon \pi_1 M \to GL_n(\F(t))$ is given by
$\psi \otimes \rho(\gamma) = \psi(\gamma) \rho(\gamma)$ for
$\gamma \in \pi_1 M$, the Reidemeister torsion $\tau_{\psi \otimes \rho}(M)$ is
known by Kirk and Livingston~\cite{KL}, and Kitano~\cite{K} to be essentially
equal to the \textit{twisted Alexander polynomial} associated to $\psi$ and
$\rho$.
For twisted Alexander polynomials we refer the reader to \cite{FV}.

\subsection{Non-acyclic Reidemeister torsion for higher dimensional representations}

We introduce non-acyclic Reidemeister torsion of a $3$-manifold for higher
dimensional representations as a natural generalization of Porti's torsion for
a $2$-dimensional representation~\cite{P}.

For a compact orientable manifold $Y$ and a representation
$\rho \colon \pi_1 Y \to PGL_n(\C)$ the Killing form of $\mathfrak{pgl}_n(\C)$
induces a non-degenerate intersection pairing:
\begin{equation} \label{eq_duality}
H_i^{\Ad \circ \rho}(Y; \mathfrak{pgl}_n(\C)) \times
H_{3-i}^{\Ad \circ \rho}(Y, \partial Y; \mathfrak{pgl}_n(\C)) \to \C.
\end{equation}

Let $M$ be a compact connected orientable $3$-manifold whose boundary consists
of $m$ tori $T_i$ and let $\gamma_i \subset T_i$ be a simple closed curve for
each $i$.
For a representation $\rho \colon \pi_1 M \to PGL_n(\C)$ a homomorphism
$\mathfrak{pgl}_n(\C)^{\pi_1 T_i} \to H_1^{\Ad \circ \rho}(M; \mathfrak{pgl}_n(\C))$,
where
$\mathfrak{pgl}_n(\C)^{\pi_1 T_i} := \{ v \in \mathfrak{pgl}_n(\C) ~;~ \Ad \circ \rho(\pi_1 T_i) v = v \}$,
is defined to
map $v$ to $[\tilde{\gamma}_i \otimes v]$ for
$v \in \mathfrak{pgl}_n(\C)^{\pi_1 T_i}$, where $\tilde{\gamma}_i$ is a lift of
$\gamma_i$ in $\widetilde{M}$.
Similarly, a homomorphism
$\mathfrak{pgl}_n(\C)^{\pi_1 T_i} \to H_2^{\Ad \circ \rho}(M; \mathfrak{pgl}_n(\C))$
is defined to map $v$ to $[\widetilde{T}_i \otimes v]$ for
$v \in \mathfrak{pgl}_n(\C)^{\pi_1 T_i}$, where $\widetilde{T}_i$ is a lift of
$T_i$ in $\widetilde{M}$.
We denote by
$\psi_1 \colon \oplus_{i=1}^m \mathfrak{pgl}_n(\C)^{\pi_1 T_i} \to H_1^{\Ad \circ \rho}(M; \mathfrak{pgl}_n(\C))$
and
$\psi_2 \colon \oplus_{i=1}^m \mathfrak{pgl}_n(\C)^{\pi_1 T_i} \to H_2^{\Ad \circ \rho}(M; \mathfrak{pgl}_n(\C))$
the direct sums of the homomorphisms for $i$ respectively.

\begin{defn} \label{defn_regular}
A representation $\rho \colon \pi_1 M \to PGL_n(\C)$ is called
\textit{$(\gamma_1, \dots, \gamma_m)$-regular} if:
\begin{itemize}
\item[(i)] $H_0^{\Ad \circ \rho}(M; \mathfrak{pgl}_n(\C)) = 0$,
\item[(ii)] $\dim \mathfrak{pgl}_n(\C)^{\pi_1 T_i} = n-1$ for each $i$,
\item[(iii)] $\psi_1 \colon \oplus_{i=1}^m \mathfrak{pgl}_n(\C)^{\pi_1 T_i} \to
H_1^{\Ad \circ \rho}(M; \mathfrak{pgl}_n(\C))$ is surjective.
\end{itemize}
\end{defn}

\begin{rem}
The above definition is equivalent to one for representations
$\pi_1 M \to SL_2(\C)$ by Porti~\cite[D\'{e}finition 3.21]{P}.
(See also \cite[Proposition 3.22]{P}.)
\end{rem}

It is easily seen that if a representation
$\rho \colon \pi_1 M \to PGL_n(\C)$ is
$(\gamma_1, \dots, \gamma_m)$-regular, then so is a conjugation of $\rho$.

The following theorem strongly depends on the works of Menal-Ferrer and
Porti~\cite{MFP1, MFP2}.
\begin{thm} \label{thm_regular}
Suppose that $M$ is a hyperbolic $3$-manifold.
Let $\rho \colon \pi_1 M \to PGL_2(\C)$ be a holonomy representation and
$\iota_n \colon PGL_2(\C) \to PGL_n(\C)$ is the representation induced by an
irreducible representation $SL_2(\C) \to SL_n(\C)$.
Then for any $\gamma_i \subset T_i$ which is not null-homologous the
composition $\iota_n \circ \rho \colon \pi_1 M \to PGL_n(\C)$ is
$(\gamma_1, \dots, \gamma_m)$-regular.
\end{thm}
 
\begin{proof}
Since $\Ad \circ \iota_n \circ \rho$ is non-commutative,
\[ H_{\Ad \circ \iota_n \circ \rho}^0(M; \mathfrak{pgl}_n(\C)) = \mathfrak{pgl}_n(\C)^{\pi_1 M} = 0. \]
Now it follows from Poincar\'{e} duality and the duality induced by the
intersection pairing \eqref{eq_duality} that
$H_0^{\Ad \circ \iota_n \circ \rho}(M; \mathfrak{pgl}_n(\C)) = 0$, which proves
the condition (i).

Since $\rho|_{\pi_1 T_i} \colon \pi_1 T_i \to $ is a parabolic representation
for each $i$, it follows from \cite[Lemma 2.1]{MFP1} that
$\dim \mathfrak{pgl}_n(\C)^{\pi_1 T_i} = n-1$ for each $i$, which proves the
condition (ii).

We denote by $X_{M, n}$ and $X_{\gamma_i, n}$ the $PGL_n(\C)$-character varieties of the fundamental groups of $M$ and $\gamma_i$ respectively.
It follows from \cite[Theorem 1.1]{MFP1} that regular functions
$X_{M, n} \to \C$ induced by symmetric polynomials of eigenvalues for
$\iota_n \circ \rho(\gamma_i)$ for all $i$ except for the determinant give
biholomorphic local coordinates of $X_{M, n}$ as a $m(n-1)$-dimensional complex
manifold.
It is easy to check that $X_{\gamma_i, n}$ has a similar biholomorphic local
coordinates as a $(n-1)$-dimensional complex manifold.
Hence the homomorphism
$T_{\chi_{\iota_n \circ \rho}} X_{M, n} \to \oplus_i T_{\chi_{\iota_n \circ \rho}} X_{\gamma_i, n}$
is an isomorphism, which implies that so is the homomorphism
$H_{\Ad \circ \iota_n \circ \rho}^1(M; \mathfrak{pgl}_n(\C)) \to \oplus_i H_{\Ad \circ \iota_n \circ \rho}^1(\gamma_i; \mathfrak{pgl}_n(\C))$
under the identifications
$T_{\chi_{\iota_n \circ \rho}} X_{M, n} = H_{\Ad \circ \iota_n \circ \rho}^1(M; \mathfrak{pgl}_n(\C))$
and
$T_{\chi_{\iota_n \circ \rho}} X_{\gamma_i, n} = H_{\Ad \circ \iota_n \circ \rho}^1(\gamma_i; \mathfrak{pgl}_n(\C))$
for each $i$.
(See also \cite[Theorem 0.3]{MFP2}.)
Now it follows from Poincar\'{e} duality and the duality induced by the
intersection pairing \eqref{eq_duality} that the homomorphism
$\oplus_i H_1^{\Ad \circ \iota_n \circ \rho}(\gamma_i; \mathfrak{pgl}_n(\C)) \to H_1^{\Ad \circ \iota_n \circ \rho}(M; \mathfrak{pgl}_n(\C))$
is an isomorphism.
Since
\[ \dim \mathfrak{pgl}_n(\C)^{\pi_1 T_i} = \dim H_1^{\Ad \circ \iota_n \circ \rho}(\gamma_i; \mathfrak{pgl}_n(\C)) = n-1, \]
the homomorphism
$\mathfrak{pgl}_n(\C)^{\pi_1 T_i} \to H_1^{\Ad \circ \iota_n \circ \rho}(\gamma_i; \mathfrak{pgl}_n(\C))$
mapping $v$ to $[\tilde{\gamma}_i \otimes v]$ for
$v \in \mathfrak{pgl}_n(\C)^{\pi_1 T_i}$ is an isomorphism for each $i$.
Therefore
$\psi_1 \colon \oplus_{i=1}^m \mathfrak{pgl}_n(\C)^{\pi_1 T_i} \to H_1^{\Ad \circ \rho}(M; \mathfrak{pgl}_n(\C))$,
which is a composition of the above homomorphisms, is also an isomorphism,
which proves the condition (iii).
\end{proof}

\begin{lem} \label{lem_T-homology}
If a representation $\rho \colon \pi_1 T^2 \to PGL_n(\C)$ satisfies that
$\dim \mathfrak{pgl}_n(\C)^{\pi_1 T^2} = n-1$, then
\begin{align*}
&(i) \dim H_0^{\Ad \circ \rho}(T^2; \mathfrak{pgl}_n(\C)) =
\dim H_2^{\Ad \circ \rho}(T^2; \mathfrak{pgl}_n(\C)) = n-1, \\
&(ii) \dim H_1^{\Ad \circ \rho}(T^2; \mathfrak{pgl}_n(\C)) = 2(n-1). \\
\end{align*}
\end{lem}

\begin{proof}
Since $H_2^{\Ad \circ \rho}(T^2; \mathfrak{pgl}_n(\C))$ is isomorphic to
$\mathfrak{pgl}_n(\C)^{\pi_1 T^2}$, 
\[ \dim H_2^{\Ad \circ \rho}(T^2; \mathfrak{pgl}_n(\C)) =
\dim \mathfrak{pgl}_n(\C)^{\pi_1 T^2} = n-1. \]
It follows from the duality induced by the intersection pairing
\eqref{eq_duality} that
\[ \dim H_0^{\Ad \circ \rho}(T^2; \mathfrak{pgl}_n(\C)) =
\dim H_2^{\Ad \circ \rho}(T^2; \mathfrak{pgl}_n(\C)) = n-1. \]
Since
\[ \sum_{i=0}^2 (-1)^i \dim H_i^{\Ad \circ \rho}(T^2; \mathfrak{pgl}_n(\C)) =
(n^2-1) \chi(M) = 0, \]
we have
\[ \dim H_1^{\Ad \circ \rho}(T^2; \mathfrak{pgl}_n(\C)) =
\dim H_0^{\Ad \circ \rho}(T^2; \mathfrak{pgl}_n(\C)) +
\dim H_2^{\Ad \circ \rho}(T^2; \mathfrak{pgl}_n(\C)) = 2(n-1). \]
\end{proof}

\begin{lem} \label{lem_M-homology}
If a representation $\rho \colon \pi_1 M \to PGL_n(\C)$ is
$(\gamma_1, \dots, \gamma_m)$-regular for $\gamma_1, \dots, \gamma_m$, then
\[ \dim H_1^{\Ad \circ \rho}(M; \mathfrak{pgl}_n(\C)) =
\dim H_2^{\Ad \circ \rho}(M; \mathfrak{pgl}_n(\C)) = m(n-1). \]
\end{lem}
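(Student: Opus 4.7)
The plan is to unwind the long exact sequence of the pair $(M, \partial M)$ with $\Ad \circ \rho$--twisted coefficients in $\mathfrak{pgl}_n(\C)$, feeding into it the three regularity conditions together with the Poincar\'e--Lefschetz duality furnished by the intersection pairing \eqref{eq_duality}.

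First I would assemble the ingredients. Applying Lemma~\ref{lem_T-homology} to each boundary torus, which is legitimate by condition (ii), gives $\dim H_0^{\Ad \circ \rho}(\partial M; \mathfrak{pgl}_n(\C)) = \dim H_2^{\Ad \circ \rho}(\partial M; \mathfrak{pgl}_n(\C)) = m(n-1)$ and $\dim H_1^{\Ad \circ \rho}(\partial M; \mathfrak{pgl}_n(\C)) = 2m(n-1)$. Since a compact $3$-manifold with boundary has the homotopy type of a $2$-complex, $H_3^{\Ad \circ \rho}(M; \mathfrak{pgl}_n(\C)) = 0$, and paired with condition (i), the non-degenerate pairing \eqref{eq_duality} yields $H_3^{\Ad \circ \rho}(M, \partial M; \mathfrak{pgl}_n(\C)) = H_0^{\Ad \circ \rho}(M, \partial M; \mathfrak{pgl}_n(\C)) = 0$ together with the identifications $\dim H_1^{\Ad \circ \rho}(M, \partial M; \mathfrak{pgl}_n(\C)) = \dim H_2^{\Ad \circ \rho}(M; \mathfrak{pgl}_n(\C))$ and $\dim H_2^{\Ad \circ \rho}(M, \partial M; \mathfrak{pgl}_n(\C)) = \dim H_1^{\Ad \circ \rho}(M; \mathfrak{pgl}_n(\C))$.

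Next I would exploit condition (iii). Since each cycle $\tilde{\gamma}_i \otimes v$ already lies in the chain complex of $\widetilde{\partial M}$, the map $\psi_1$ factors through the inclusion-induced map $H_1^{\Ad \circ \rho}(\partial M; \mathfrak{pgl}_n(\C)) \to H_1^{\Ad \circ \rho}(M; \mathfrak{pgl}_n(\C))$, so surjectivity of $\psi_1$ forces surjectivity of that inclusion, and hence the connecting map $H_1^{\Ad \circ \rho}(M; \mathfrak{pgl}_n(\C)) \to H_1^{\Ad \circ \rho}(M, \partial M; \mathfrak{pgl}_n(\C))$ in the long exact sequence vanishes. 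The relevant tail
\[
H_1^{\Ad \circ \rho}(M; \mathfrak{pgl}_n(\C)) \xrightarrow{\ 0\ } H_1^{\Ad \circ \rho}(M, \partial M; \mathfrak{pgl}_n(\C)) \to H_0^{\Ad \circ \rho}(\partial M; \mathfrak{pgl}_n(\C)) \to H_0^{\Ad \circ \rho}(M; \mathfrak{pgl}_n(\C)) = 0
\]
then collapses to an isomorphism $H_1^{\Ad \circ \rho}(M, \partial M; \mathfrak{pgl}_n(\C)) \cong H_0^{\Ad \circ \rho}(\partial M; \mathfrak{pgl}_n(\C))$, which by duality gives $\dim H_2^{\Ad \circ \rho}(M; \mathfrak{pgl}_n(\C)) = m(n-1)$. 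Finally, $\dim H_1^{\Ad \circ \rho}(M; \mathfrak{pgl}_n(\C)) = \dim H_2^{\Ad \circ \rho}(M; \mathfrak{pgl}_n(\C))$ follows from the vanishing of the twisted Euler characteristic $(n^2-1)\chi(M) = 0$ (the boundary being toroidal) together with $H_0 = H_3 = 0$, so both equal $m(n-1)$.

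The main obstacle is recognizing the factorization of $\psi_1$ through $H_1^{\Ad \circ \rho}(\partial M; \mathfrak{pgl}_n(\C))$, which is what converts condition (iii) into the surjectivity statement that makes the long exact sequence unwind; after this, everything reduces to routine dimension counting.
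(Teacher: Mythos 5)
Your proof is correct and uses the same basic toolkit as the paper: duality via the intersection pairing \eqref{eq_duality}, Lemma~\ref{lem_T-homology} for the boundary, the long exact sequence of the pair $(M, \partial M)$ driven by the surjectivity forced by condition~(iii), and the vanishing twisted Euler characteristic to equate $\dim H_1$ and $\dim H_2$. The only divergence is which segment of the long exact sequence you read off: you isolate the tail $H_1(M,\partial M)\to H_0(\partial M)\to 0$ to compute $\dim H_2^{\Ad\circ\rho}(M;\mathfrak{pgl}_n(\C))$ directly via duality, while the paper instead extracts the short exact sequence $0\to H_2(M,\partial M)\to H_1(\partial M)\to H_1(M)\to 0$ and uses $\dim H_2(M,\partial M)=\dim H_1(M)$ to solve $2\dim H_1(M)=\dim H_1(\partial M)=2m(n-1)$, then gets $\dim H_2(M)$ from the Euler characteristic. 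Both are equally routine once the factorization of $\psi_1$ through $H_1^{\Ad\circ\rho}(\partial M;\mathfrak{pgl}_n(\C))$ is recognized, which you identify correctly as the key step; the two computations are in effect dual to each other under \eqref{eq_duality}.
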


\begin{proof}
Since $\psi_1 \colon \oplus_{i=1}^m \mathfrak{pgl}_n(\C)^{\Ad \circ \rho(\pi_1 T_i)}
\to H_1^{\Ad \circ \rho}(M; \mathfrak{pgl}_n(\C))$ is surjective, so is the
homomorphism $H_1^{\Ad \circ \rho}(\partial M; \mathfrak{pgl}_n(\C)) \to
H_1^{\Ad \circ \rho}(M; \mathfrak{pgl}_n(\C))$.
It follows from the duality induced by the intersection pairing
\eqref{eq_duality} that the dual homomorphism
$H_2^{\Ad \circ \rho}(M, \partial M; \mathfrak{pgl}_n(\C)) \to
H_1^{\Ad \circ \rho}(\partial M; \mathfrak{pgl}_n(\C))$ is injective.
Now the homology long exact sequence for the pair $(M, \partial M)$ gives the
exact sequence
\[ 0 \to H_2^{\Ad \circ \rho}(M, \partial M; \mathfrak{pgl}_n(\C)) \to
H_1^{\Ad \circ \rho}(\partial M; \mathfrak{pgl}_n(\C)) \to
H_1^{\Ad \circ \rho}(M; \mathfrak{pgl}_n(\C)) \to 0. \]
Hence by Lemma \ref{lem_T-homology} (ii) we obtain
\begin{align*}
\dim H_1^{\Ad \circ \rho}(M; \mathfrak{pgl}_n(\C)) &=
\frac{1}{2} \dim H_1^{\Ad \circ \rho}(\partial M; \mathfrak{pgl}_n(\C)) \\
&= \frac{1}{2} \sum_{i=1}^m \dim H_1^{\Ad \circ \rho}(T_i; \mathfrak{pgl}_n(\C)) =
m(n-1).
\end{align*}
Since
\[ \sum_{i=0}^3 (-1)^i \dim H_i^{\Ad \circ \rho}(M; \mathfrak{pgl}_n(\C)) =
(n^2-1) \chi(M) = 0, \]
we have
\[ \dim H_2^{\Ad \circ \rho}(M; \mathfrak{pgl}_n(\C)) =
\dim H_1^{\Ad \circ \rho}(T^2; \mathfrak{pgl}_n(\C)) = m(n-1). \]
\end{proof}

\begin{lem}
If a representation $\rho \colon \pi_1 M \to PGL_n(\C)$ is
$(\gamma_1, \dots, \gamma_l)$-regular for $\gamma_1, \dots, \gamma_l$, then
$\psi_1 \colon \oplus_{i=1}^m \mathfrak{pgl}_n(\C)^{\pi_1 T_i} \to
H_1^{\Ad \circ \rho}(M; \mathfrak{pgl}_n(\C))$ and
$\psi_2 \colon \oplus_{i=1}^m \mathfrak{pgl}_n(\C)^{\pi_1 T_i} \to
H_2^{\Ad \circ \rho}(M; \mathfrak{pgl}_n(\C))$ are isomorphisms.
\end{lem}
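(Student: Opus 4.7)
The plan is to handle $\psi_1$ and $\psi_2$ separately, reducing each to either a surjectivity or an injectivity statement between vector spaces that turn out to have the same finite dimension.

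For $\psi_1$ the argument is immediate. Surjectivity is built into the hypothesis as condition (iii) of Definition \ref{defn_regular}. The domain has dimension $m(n-1)$ by condition (ii), and the target has dimension $m(n-1)$ by Lemma \ref{lem_M-homology}, so a surjection between vector spaces of equal finite dimension must be an isomorphism.

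For $\psi_2$ I would factor the map as
\[
\bigoplus_{i=1}^m \mathfrak{pgl}_n(\C)^{\pi_1 T_i}
\xrightarrow{\cong}
\bigoplus_{i=1}^m H_2^{\Ad \circ \rho}(T_i; \mathfrak{pgl}_n(\C))
= H_2^{\Ad \circ \rho}(\partial M; \mathfrak{pgl}_n(\C))
\xrightarrow{j_*} H_2^{\Ad \circ \rho}(M; \mathfrak{pgl}_n(\C)),
\]
where the first arrow is the identification $v \mapsto [\widetilde{T}_i \otimes v]$ already used in the proof of Lemma \ref{lem_T-homology}, and $j_*$ is induced by inclusion. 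By Lemma \ref{lem_T-homology}(i) (summed over $i$) and Lemma \ref{lem_M-homology}, both the middle and the right-hand spaces have dimension $m(n-1)$, so it suffices to show that $j_*$ is injective.

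For this I would appeal to the homology long exact sequence of the pair $(M, \partial M)$, whose relevant segment is
\[
H_3^{\Ad \circ \rho}(M, \partial M; \mathfrak{pgl}_n(\C))
\to H_2^{\Ad \circ \rho}(\partial M; \mathfrak{pgl}_n(\C))
\xrightarrow{j_*} H_2^{\Ad \circ \rho}(M; \mathfrak{pgl}_n(\C)).
\]
The key point is thus the vanishing $H_3^{\Ad \circ \rho}(M, \partial M; \mathfrak{pgl}_n(\C)) = 0$, which is the only substantive step; it follows from the non-degenerate Poincar\'{e} pairing \eqref{eq_duality}, which identifies this group with the dual of $H_0^{\Ad \circ \rho}(M; \mathfrak{pgl}_n(\C))$, and the latter is zero by condition (i) of regularity. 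Once $j_*$ is known to be injective, equality of dimensions forces it to be an isomorphism, and composing with the first identification yields that $\psi_2$ is an isomorphism as well.
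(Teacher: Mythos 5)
Your proposal is correct and follows essentially the same route as the paper: $\psi_1$ is an isomorphism by surjectivity (hypothesis) plus the dimension count from Lemma \ref{lem_M-homology}, and $\psi_2$ is an isomorphism because the duality pairing \eqref{eq_duality} together with condition (i) gives $H_3^{\Ad\circ\rho}(M,\partial M;\mathfrak{pgl}_n(\C))=0$, so the long exact sequence of the pair makes $H_2(\partial M)\to H_2(M)$ injective, and dimensions again match. The only difference is cosmetic: you explicitly factor $\psi_2$ through $H_2^{\Ad\circ\rho}(\partial M;\mathfrak{pgl}_n(\C))$, whereas the paper leaves that identification implicit.
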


\begin{proof}
By Lemma \ref{lem_M-homology}
\[ \dim H_1^{\Ad \circ \rho}(M; \mathfrak{pgl}_n(\C)) =
\dim H_2^{\Ad \circ \rho}(M; \mathfrak{pgl}_n(\C)) =
\dim \oplus_{i=1}^m \mathfrak{pgl}_n(\C)^{\pi_1 T_i} = m(n-1). \]
Since $\psi_1 \colon \oplus_{i=1}^m \mathfrak{pgl}_n(\C)^{\pi_1 T_i} \to
H_1^{\Ad \circ \rho}(M; \mathfrak{pgl}_n(\C))$ is surjective, it is an isomorphism.
Since $H_0^{\Ad \circ \rho}(M; \mathfrak{pgl}_n(\C)) = 0$, it follows from the duality
induced by the intersection pairing \eqref{eq_duality} that
$H_3^{\Ad \circ \rho}(M, \partial M; \mathfrak{pgl}_n(\C)) = 0$.
Now the homology long exact sequence for the pair $(M, \partial M)$ implies
that the homomorphism $H_2(\partial M; \mathfrak{pgl}_n(\C)) \to
H_2^{\Ad \circ \rho}(M; \mathfrak{pgl}_n(\C))$ is injective.
Hence $\psi_2 \colon \oplus_{i=1}^m \mathfrak{pgl}_n(\C)^{\pi_1 T_i} \to
H_2^{\Ad \circ \rho}(M; \mathfrak{pgl}_n(\C))$ is also injective, and so it is an
isomorphism. 
\end{proof}

\begin{defn}
For a $(\gamma_1, \dots, \gamma_m)$-regular representation
$\rho \colon \pi_1 M \to PGL_n(\C)$ we define the
\textit{non-acyclic Reidemeister torsion}
$T_{(\gamma_1, \dots, \gamma_m), \rho}(M)$ associated to
$(\gamma_1, \dots, \gamma_m)$ and $\rho$ as follows.
We choose a basis $b_i$ of $\mathfrak{pgl}_n(\C)^{\pi_1 T_i}$ for each $i$.
Then
\[ T_{(\gamma_1, \dots, \gamma_m), \rho}(M) =
\tau_{\Ad \circ \rho}(M; h_1\cup h_2) ~\in \C^\times / \pm 1, \]
where
\begin{align*}
h_1 &:= \langle \psi_1(b_1), \dots, \psi_1(b_m) \rangle, \\
h_2 &:= \langle \psi_2(b_1), \dots, \psi_2(b_m) \rangle.
\end{align*}
\end{defn}

It can be checked as follows that $T_{(\gamma_1, \dots, \gamma_m), \rho}(M)$ does
not depend on the choice of $b_i$.
Let $b_i'$ be another basis of $\mathfrak{pgl}_n(\C)^{\pi_1 T_i}$ for each $i$, and set
\begin{align*}
h_1' &:= \langle \psi_1(b_1'), \dots, \psi_1(b_m') \rangle, \\
h_2' &:= \langle \psi_2(b_1'), \dots, \psi_2(b_m') \rangle.
\end{align*}
Then by the definition of Reidemeister torsion we have
\[ \tau_{\Ad \circ \rho}(M; h_1'\cup h_2') = \frac{[h_1'/h_1]}{[h_2' / h_2]} \tau_{\Ad \circ \rho}(M; h_1\cup h_2). \]
and an easy computation implies
\[ [h_1' / h_1] = [h_2' / h_2] = \prod_{i=1}^m [b_i' / b_i], \]
which shows the independence.

\subsection{Non-acyclic Reidemeister torsion for fibered $3$-manifolds}

We show a formula computing non-acyclic Reidemeister torsion of fibered
$3$-manifolds from the monodromy maps.
The formula generalizes a homological version of \cite[Main Theorem]{D} for
fibered knots and $2$-dimensional representations. 

\begin{thm} \label{thm_FT}
Let $\gamma_1, \dots, \gamma_m$ be the boundary components of $S$ and let
$\varphi \in \Gamma_S$.
For a $(\gamma_1, \dots, \gamma_m)$-regular representation
$\rho \colon \pi_1 M_\varphi \to PGL_n(\C)$ satisfying
$H_0^{\Ad \circ \rho}(S; \mathfrak{pgl}_n(\C)) = 0$,
\[ T_{(\gamma_1, \dots, \gamma_m), \rho}(M_\varphi) =
\lim_{t \to 1} \frac{\det(t \varphi_* - id)}{(t-1)^{m(n-1)}}, \]
where we consider
$\varphi_* \colon H_1^{\Ad \circ \rho}(S; \mathfrak{pgl}_n(\C)) \to H_1^{\Ad \circ \rho}(S; \mathfrak{pgl}_n(\C))$
in the formula.
\end{thm}

\begin{proof}
We denote by $\psi_1' \colon \mathfrak{pgl}_n(\C)^{\pi_1 T_i} \to
H_1^{\Ad \circ \rho}(S; \mathfrak{pgl}_n(\C))$ the factor of
$\psi_1 \colon \mathfrak{pgl}_n(\C)^{\pi_1 T_i} \to
H_1^{\Ad \circ \rho}(M_\varphi; \mathfrak{pgl}_n(\C))$.
It follows from the duality induced by the intersection pairing
\eqref{eq_duality} that $H_2^{\Ad \circ \rho}(S, \partial S; \mathfrak{pgl}_n(\C)) =
H_0^{\Ad \circ \rho}(S; \mathfrak{pgl}_n(\C)) = 0$.
Now the homology long exact sequence for the pair $(S, \partial S)$ implies
that the homomorphism $H_1^{\Ad \circ \rho}(\partial S; \mathfrak{pgl}_n(\C)) \to
H_1^{\Ad \circ \rho}(S; \mathfrak{pgl}_n(\C))$ is injective, and so is $\psi_1'$.
Choose a basis $b_i$ of $\mathfrak{pgl}_n(\C)^{\pi_1 T_i}$ for each $i$ and take a
basis $h = \langle \psi_1'(b_1), \dots, \psi_1'(b_m) \rangle \cup b$ of
$H_1^{\Ad \circ \rho}(S; \mathfrak{pgl}_n(\C))$, by adding subbasis $b$.

Take a representative of $\varphi$ and a triangulation of $S$ such that the
representative is simplicial, and consider the following exact sequence:
\[ 0 \to C_*^{\Ad \circ \rho}(\widetilde{S}) \otimes \mathfrak{pgl}_n(\C)
\xrightarrow{id \times 1 - \varphi_* \times 0}
C_*^{\Ad \circ \rho}(\widetilde{S} \times [0, 1]) \otimes \mathfrak{pgl}_n(\C) \to
C_*^{\Ad \circ \rho}(\widetilde{M}_\varphi) \otimes \mathfrak{pgl}_n(\C) \to 0. \]
By Lemma \ref{lem_M}
\[ \tau_\rho(S \times [0, 1]; h) =
\tau_\rho(S; h) T_{(\gamma_1, \dots, \gamma_m), \rho}(M_\varphi)
\tau(\mathcal{H}_*, d), \]
where
\begin{align*}
\mathcal{H}_* &:= (0 \to H_2^{\Ad \circ \rho}(M_\varphi) \to
H_1^{\Ad \circ \rho}(S) \xrightarrow{I - \varphi_*} H_1^{\Ad \circ \rho}(S) \to
H_1^{\Ad \circ \rho}(M_\varphi) \to 0), \\
d &:= h_1\cup h \cup h \cup h_2.
\end{align*}
Since $\tau_\rho(S \times [0, 1]; h) = \tau_\rho(S; h)$, we have
\[ T_{(\gamma_1, \dots, \gamma_m), \rho}(M_\varphi) =
\tau(\mathcal{H}_*, d)^{-1}. \]

Considering the following commutative diagram of exact sequences
\[
\begin{CD}
0 @>>> H_2^{\Ad \circ \rho}(\partial M_\varphi) @>>>
\oplus_i H_1^{\Ad \circ \rho}(\gamma_i) @>0>>
\oplus_i H_1^{\Ad \circ \rho}(\gamma_i) @>>>
H_1^{\Ad \circ \rho}(\partial M_\varphi) \\
@. @VVV @VVV @VVV @VVV\\
0 @>>> H_2^{\Ad \circ \rho}(M_\varphi) @>>> H_1^{\Ad \circ \rho}(S)
@> id-\varphi_* >> H_1^{\Ad \circ \rho}(S) @>>>
H_1^{\Ad \circ \rho}(M_\varphi) @>>> 0,
\end{CD}
\]
where we omit to write the coefficient $\mathfrak{pgl}_n(\C)$,
we see that the homomorphism $H_1^{\Ad \circ \rho}(S; \mathfrak{pgl}_n(\C)) \to
H_1^{\Ad \circ \rho}(M_\varphi; \mathfrak{pgl}_n(\C))$ maps
$\langle \psi_1'(b_1), \dots, \psi_1'(b_m) \rangle$ to $h_1$ and that the
homomorphism $H_2^{\Ad \circ \rho}(M_\varphi; \mathfrak{pgl}_n(\C)) \to
H_1^{\Ad \circ \rho}(S; \mathfrak{pgl}_n(\C))$ maps $h_2$ to
$\langle \psi_1'(b_1), \dots, \psi_1'(b_m) \rangle$.
Therefore
\[ \tau(\mathcal{H}_*, d)^{-1} =
\det((id - \varphi_*) \colon \coker \psi_1' \to \coker \psi_1') =
\pm \lim_{t \to 1} \frac{\det(t \varphi_* - id)}{(t-1)^{m(n-1)}}, \]
which proves the theorem.
\end{proof}

For a later use, we recall a well-known formula of
`twisted Alexander polynomials' for fibered $3$-manifolds.
See for instance \cite{M2}.
\begin{lem} \label{lem_FA}
Let $\varphi \in \Gamma_S$ and let
$\psi \colon \pi_1 M_\varphi \to \langle t \rangle$ be the homomorphism induced
by the fibration.
For a representation $\rho \colon \pi_1 M_\varphi \to GL_n(V)$ over $\F$,
\[ \tau_{\psi \otimes \rho}(M_\varphi) = \frac{\det(t \varphi_1 - id)}{\det(t \varphi_0 - id)}, \]
where $\varphi_0 \colon H_0^{\rho}(S; V) \to H_0^{\rho}(S; V)$,
$\varphi_1 \colon H_1^{\rho}(S; V) \to H_1^{\rho}(S; V)$ are the homomorphisms
induced by $\varphi$.
\end{lem}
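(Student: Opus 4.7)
The plan is to apply Milnor's multiplicativity (Lemma \ref{lem_M}) to the standard mapping-torus short exact sequence of chain complexes. Fix a finite CW structure on $S$ and endow $M_\varphi$ with the induced CW structure built from the cells of $S$ together with their products with $[0,1]$ glued via $\varphi$. Since $\psi$ is trivial on $\pi_1 S$, the action of $\pi_1 S$ on $V(t) := V \otimes_{\F} \F(t)$ via $\psi \otimes \rho$ reduces to $\rho$ extended $\F(t)$-linearly. One then obtains the short exact sequence of $\Z[\pi_1 M_\varphi]$-chain complexes
\[ 0 \to C_*(\widetilde{S}) \otimes_{\Z[\pi_1 S]} V(t) \xrightarrow{\; t\varphi_\sharp - id \;} C_*(\widetilde{S}) \otimes_{\Z[\pi_1 S]} V(t) \to C_*(\widetilde{M}_\varphi) \otimes_{\Z[\pi_1 M_\varphi]} V(t) \to 0, \]
where the twist by $t$ encodes the deck-transformation action of the generator of $\pi_1 S^1 \subset \pi_1 M_\varphi$.

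Next, I would identify the associated long exact sequence. Flatness of $\F(t)$ over $\F$ gives $H_i(C_*(\widetilde{S}) \otimes V(t)) = H_i^\rho(S; V) \otimes_\F \F(t)$, and unwinding the definition of the connecting map shows it equals multiplication by $t\varphi_i - id$ on this homology. Because $\tau_{\psi \otimes \rho}(M_\varphi)$ is (implicitly) well-defined, the quotient complex is acyclic, forcing each $t\varphi_i - id$ to be an $\F(t)$-linear automorphism. The long exact sequence therefore decomposes into two-term acyclic pieces
\[ 0 \to H_i^\rho(S; V)\otimes \F(t) \xrightarrow{\; t\varphi_i - id \;} H_i^\rho(S; V) \otimes \F(t) \to 0 \]
for $i = 0, 1$; higher homology of $S$ vanishes because $\partial S \neq \emptyset$.

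Finally, apply Lemma \ref{lem_M} with the natural bases coming from the product CW structure. The basis-compatibility condition $[c_i' c_i''/c_i] = 1$ holds on the nose, and the torsions of the left and middle copies of $C_*(\widetilde{S}) \otimes V(t)$ are literally equal, so they cancel in the multiplicativity formula, leaving $\tau_{\psi \otimes \rho}(M_\varphi) = \tau(\mathcal{H}_*, d)^{-1}$. Each two-term piece contributes $\det(t\varphi_i - id)^{\pm 1}$, with sign alternating with the degree $i$, and collecting these factors yields the stated formula. The only delicate point is a careful bookkeeping of the alternating signs in the torsion of the long exact sequence, which is routine but where a convention slip would flip the answer to its reciprocal.
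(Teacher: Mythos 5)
The paper itself gives no proof of this lemma: it is labelled ``well-known'' and referenced to Milnor's \emph{Infinite cyclic coverings}. The nearest in-paper argument is the proof of Theorem~\ref{thm_FT}, which uses precisely the strategy you propose (apply Lemma~\ref{lem_M} to a mapping-torus short exact sequence, cancel the two surface torsions, and read off the torsion of the long exact sequence). So the skeleton of your argument is the right one and matches the paper's method for the analogous result.

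There is, however, a concrete error in the displayed short exact sequence. As written, the two left-hand terms are identical copies of $C_i(\widetilde{S})\otimes V(t)$ in each degree $i$, so exactness would force $C_i(\widetilde{M}_\varphi)\otimes V(t)=0$ for all $i$, which is false. What you evidently have in mind is that $C_*(\widetilde{M}_\varphi)\otimes V(t)$ is the algebraic mapping cone of the chain map $t\varphi_\sharp-\mathrm{id}$; but the mapping cone sits in a \emph{degree-shifted} short exact sequence
\[
0 \to C_*(\widetilde{S})\otimes V(t) \to C_*(\widetilde{M}_\varphi)\otimes V(t) \to C_{*-1}(\widetilde{S})\otimes V(t) \to 0,
\]
in which $\pm(t\varphi_i-\mathrm{id})$ appears as the \emph{connecting} homomorphism of the induced long exact sequence, not as the first arrow. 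Equivalently, you may use the paper's version with $C_*(\widetilde{S}\times[0,1])\otimes V(t)$ in the middle, where the first map is $i_0 - t\,\varphi^*\!\circ i_1$, and invoke simple-homotopy invariance to equate the torsions of $S$ and $S\times[0,1]$. With either correction your cancellation and sign bookkeeping go through as described. A small further simplification: you need not deduce acyclicity from the implicit well-definedness of $\tau_{\psi\otimes\rho}(M_\varphi)$; it is automatic, since each $\det(t\varphi_i-\mathrm{id})$ is a polynomial in $t$ with nonzero constant term $\pm1$ and hence is already a unit of $\F(t)$.
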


The following is a direct corollary of Theorem \ref{thm_FT} and Lemma
\ref{lem_FA}.
\begin{cor} \label{cor_L}
Let $\gamma_1, \dots, \gamma_m$ be the boundary components of $S$, let
$\varphi \in \Gamma_S$ and let
$\psi \colon \pi_1 M_\varphi \to \langle t \rangle$ be the homomorphism induced
by the fibration.
For a $(\gamma_1, \dots, \gamma_m)$-regular representation
$\rho \colon \pi_1 M_\varphi \to PGL_n(\C)$ satisfying
$H_0^{\Ad \circ \rho}(S; \mathfrak{pgl}_n(\C)) = 0$,
\[ T_{(\gamma_1, \dots, \gamma_m), \rho}(M_\varphi) = \lim_{t \to 1} \frac{\tau_{\psi \otimes \Ad \circ \rho}(M_\varphi)}{(t-1)^{m(n-1)}}, \]
where we consider
$\varphi_* \colon H_1^{\Ad \circ \rho}(S; \mathfrak{pgl}_n(\C)) \to H_1^{\Ad \circ \rho}(S; \mathfrak{pgl}_n(\C))$
in the formula.
\end{cor}

\section{Main theorems}

\subsection{Proof}

In this section we show the main theorems on torsion invariants and cluster algebras for surfaces.

Recall that Fock and Goncharov constructed a regular map $\nu_T \colon \mathcal{X}_{T, n} \to \mathcal{X}_{S, n}$ for an ideal triangulation $T$ of $S$, and that $T_{\chi_\rho} \mathcal{X}_{S, n}$ is identified with a subspace of $H_{\Ad \circ \rho}^1(\pi_1 S; \mathfrak{pgl}_n(\C))$ for a representation $\rho \colon \pi_1 S \to PGL_n(\C)$.
Thus $\nu_T$ induces a map $T_{(y_1, \dots, y_l)} \mathcal{X}_{T, n} \to H_{\Ad \circ \rho}^1(\pi_1 S; \mathfrak{pgl}_n(\C))$.

\begin{lem} \label{lem_tangent}
Let $(y_1, \dots, y_l) \in \mathcal{X}_{T, n}$ and
$\rho \colon \pi_1 S \to PGL_n(\C)$ a representation such that
$\chi_{(y_1, \dots, y_l)} = \chi_\rho$.
If $H_{\Ad \circ \rho}^0(\pi_1 S; \mathfrak{pgl}_n(\C)) = 0$, then the map
$T_{(y_1, \dots, y_l)} \mathcal{X}_{T, n} \to H_{\Ad \circ \rho}^1(\pi_1 S; \mathfrak{pgl}_n(\C))$
is an isomorphism.
\end{lem}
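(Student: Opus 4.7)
The plan is a dimension count paired with the natural factorization of the map.

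First, I would compute the dimension of $H^1$. Since $\partial S$ is non-empty, $\pi_1 S$ is free of rank $r = 1 - \chi(S)$, and a standard two-term free resolution of $\Z$ over $\Z[\pi_1 S]$ gives, for any finite-dimensional representation $V$,
\[ \dim H^1(\pi_1 S;V) = (r-1)\dim V + \dim V^{\pi_1 S}. \]
With $V = \mathfrak{pgl}_n(\C)$ and the hypothesis $V^{\pi_1 S} = H^0_{\Ad\circ\rho}(\pi_1 S;\mathfrak{pgl}_n(\C)) = 0$, this yields $\dim H^1_{\Ad\circ\rho}(\pi_1 S;\mathfrak{pgl}_n(\C)) = -\chi(S)(n^2-1) = l$, matching $\dim \mathcal{X}_{T,n}$.

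Second, the map of the lemma factors as
\[ T_{(y_1,\dots,y_l)}\mathcal{X}_{T,n} \xrightarrow{d\nu_T} T_{\chi_{(\rho,B_1,\dots,B_m)}}\mathcal{X}_{S,n} \xrightarrow{d\pi} T_{\chi_\rho}X_{S,n} \hookrightarrow H^1_{\Ad\circ\rho}(\pi_1 S;\mathfrak{pgl}_n(\C)), \]
and I plan to show each arrow is an isomorphism at the point in question. A direct parameter count gives $\dim \widetilde{\mathcal{X}}_{S,n} = (1-\chi(S))(n^2-1)$; quotienting by $PGL_n(\C)$-conjugation, which is locally free at $\chi_\rho$ because $H^0 = 0$ and $PGL_n(\C)$ has trivial center, gives $\dim \mathcal{X}_{S,n} = l$, and the analogous computation applied directly to $\hom(\pi_1 S, PGL_n(\C))$ gives $\dim X_{S,n} = l$. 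The identity $\varphi_T \circ \nu_T = \mathrm{id}$ provides $d\varphi_T$ as a left inverse to $d\nu_T$, making $d\nu_T$ injective and hence, by equality of dimensions, an isomorphism. The surjectivity of $d\pi$ is already noted in the paper, so $d\pi$ too becomes an isomorphism by equality of dimensions. The Weil inclusion $T_{\chi_\rho}X_{S,n} \hookrightarrow H^1_{\Ad\circ\rho}(\pi_1 S;\mathfrak{pgl}_n(\C))$ is then an isomorphism by the same dimension equality, and the composition is what is claimed.

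The real work is concentrated in the smoothness assertion: one must check that the hypothesis $H^0_{\Ad\circ\rho} = 0$ actually forces both GIT quotients $\mathcal{X}_{S,n}$ and $X_{S,n}$ to be smooth of the expected dimension $l$ at $\chi_\rho$, so that their tangent spaces have dimension $l$ rather than something larger. This is standard for a GIT quotient at a point with trivial infinitesimal stabilizer, but it is the one place where the hypothesis is doing non-formal work; everything else --- the computation of $\dim H^1$, the injectivity of $d\nu_T$, and the surjectivity of $d\pi$ --- is either a direct calculation or quoted from preceding paragraphs.
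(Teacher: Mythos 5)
Your argument is the same dimension-counting argument the paper uses, just unpacked. The paper's proof runs the two inequality chains
\[
l = \dim X_{S,n} \leq \dim T_{\chi_\rho}X_{S,n} \leq \dim H^1_{\Ad\circ\rho}(\pi_1 S;\mathfrak{pgl}_n(\C)) = l
\]
and then $l = \dim T_{\chi_\rho}X_{S,n} \leq \dim T_{(y_1,\dots,y_l)}\mathcal{X}_{T,n} = l$, citing the surjectivity of $d\pi$ for the last step, and concludes. You instead factor the map through $T\mathcal{X}_{S,n}$ and verify each of the three arrows ($d\nu_T$, $d\pi$, Weil inclusion) separately. The two are logically equivalent, but your version makes visible a point the paper leaves implicit: for the middle composition $d\pi\circ d\nu_T$ to be forced onto by the surjectivity of $d\pi$ alone, one needs $\dim T_{\chi_{(\rho,B_1,\dots,B_m)}}\mathcal{X}_{S,n}=l$, i.e.\ smoothness of the framed quotient at the point, and you correctly flag this as the place where $H^0=0$ (trivial infinitesimal stabilizer) does non-formal work. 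You also spell out the $\dim H^1=l$ computation via the rank of the free group $\pi_1 S$, which the paper merely asserts. So this is the same route, executed with more care about where the hypothesis enters; no gap.
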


\begin{proof}
Since $T_{\chi_\rho} X_{S, n}$ embeds in
$H_{\Ad \circ \rho}^1(\pi_1 S; \mathfrak{pgl}_n(\C))$, we have
\[ l = \dim X_{S, n} \leq T_{\chi_\rho} X_{S, n} \leq H_{\Ad \circ \rho}^1(\pi_1 S; \mathfrak{pgl}_n(\C)) = l, \]
and so the inequalities are all equalities.
Moreover, since for $\chi_{(\rho, B_1, \dots, B_m)} \in \mathcal{X}_{S, n}$,
$(d \pi)_{\chi_{(\rho, B_1, \dots, B_m)}} \colon T_{\chi_{(\rho, B_1, \dots, B_m)}} \mathcal{X}_{S, n} \to T_{\chi_\rho} X_{S, n}$
is an epimorphism, we have
\[ l = T_{\chi_\rho} X_{S, n} \leq T_{(y_1, \dots, y_l)} \mathcal{X}_{T, n} = l, \]
and the inequality is an equality, which proves the lemma.
\end{proof}

Now we prove the following main theorems:

\begin{thm} \label{thm_CA}
Let $\varphi \in \Gamma_S$.
For a representation $\rho \colon \pi_1 M_\varphi \to PGL_n(\C)$ satisfying
$H_0^{\Ad \circ \rho}(S; \mathfrak{pgl}_n(\C)) = 0$ and
$(y_1^0, \dots, y_l^0) \in \mathcal{X}_{T, n}^{\varphi^*}$, if
$\chi_{\rho|_{\pi_1 S}} = \chi_{(y_1^0, \dots, y_l^0)}$, then
\[ \tau_{\psi \otimes \Ad \circ \rho}(M_\varphi) = \left. \det \left( t \left( \frac{\partial \varphi^*(y_j)}{\partial y_i} \right) - I \right) \right|_{(y_1, \dots, y_l) = (y_1^0, \dots, y_l^0)}. \]
\end{thm}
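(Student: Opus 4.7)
The plan is to chain Lemma \ref{lem_FA} with Lemma \ref{lem_tangent} and Theorem \ref{thm_equivariance}, converting the formula on homology into one on tangent spaces to $\mathcal{X}_{T,n}$.

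First, I apply Lemma \ref{lem_FA} to the adjoint representation $\Ad \circ \rho$, which gives
\[ \tau_{\psi \otimes \Ad \circ \rho}(M_\varphi) = \frac{\det(t \varphi_1 - id)}{\det(t \varphi_0 - id)}, \]
with $\varphi_i$ acting on $H_i^{\Ad \circ \rho}(S;\mathfrak{pgl}_n(\C))$. Since $H_0^{\Ad \circ \rho}(S;\mathfrak{pgl}_n(\C)) = 0$ by hypothesis, the denominator is the empty determinant $1$, leaving $\tau_{\psi \otimes \Ad \circ \rho}(M_\varphi) = \det(t \varphi_1 - id)$.

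Second, I pass from $H_1$ to $H^1$. The Killing form on $\mathfrak{pgl}_n(\C)$ is non-degenerate and $\Ad$-invariant, so $\mathfrak{pgl}_n(\C)$ is self-dual as a $\pi_1 S$-module. The resulting Kronecker pairing $H^1_{\Ad \circ \rho}(S;\mathfrak{pgl}_n(\C)) \times H_1^{\Ad \circ \rho}(S;\mathfrak{pgl}_n(\C)) \to \C$ is non-degenerate, and under it the pullback $\varphi^1$ on cohomology is the transpose of $\varphi_1$; in particular the characteristic polynomials agree. Since $S$ is aspherical, $H^1_{\Ad \circ \rho}(S;\mathfrak{pgl}_n(\C)) = H^1_{\Ad \circ \rho}(\pi_1 S;\mathfrak{pgl}_n(\C))$, so I may replace $\varphi_1$ by the action of $\varphi$ on the group cohomology without changing the determinant.

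Third, I identify this cohomological action with the Jacobian. The hypothesis $\chi_{\rho|_{\pi_1 S}} = \chi_{(y_1,\ldots,y_l)}$ together with $\chi_{\rho|_{\pi_1 S}} \in X_{S,n}^{\varphi^*}$ and Theorem \ref{thm_equivariance} forces $(y_1,\ldots,y_l)$ to be a fixed point of $\varphi^* \colon \mathcal{X}_{T,n} \to \mathcal{X}_{T,n}$ (up to interpreting the Jacobian at $(y_1,\ldots,y_l)$ between tangent spaces identified by the standard coordinates). Lemma \ref{lem_tangent} provides an isomorphism
\[ T_{(y_1,\ldots,y_l)} \mathcal{X}_{T,n} \xrightarrow{\;\sim\;} H^1_{\Ad \circ \rho}(\pi_1 S;\mathfrak{pgl}_n(\C)), \]
and the commutative diagram of Theorem \ref{thm_equivariance}, differentiated at $(y_1,\ldots,y_l)$, shows that the Jacobian $(\partial \varphi^*(y_j)/\partial y_i)$ is carried to $\varphi^1$ under this isomorphism. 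Hence $\det(t\varphi^1 - I) = \det\!\bigl(t(\partial \varphi^*(y_j)/\partial y_i) - I\bigr)$, and combining the three steps yields the theorem.

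The main obstacle will be the third step: verifying naturality of the isomorphism in Lemma \ref{lem_tangent} with respect to the self-maps appearing in Theorem \ref{thm_equivariance}, so that the differential of $\varphi^*$ in coordinates really corresponds to the pullback on $H^1_{\Ad \circ \rho}(\pi_1 S;\mathfrak{pgl}_n(\C))$. This reduces to unwinding the construction of $\nu_T$ from \cite{FG} and tracking how a tangent vector $\partial/\partial y_i$ is sent to the Weil cocycle $\gamma \mapsto \frac{d}{dt}\rho_t(\gamma)\rho_t(\gamma)^{-1}|_{t=0}$ described in subsection \ref{subsec_CV}; once this naturality is in hand, the remaining ingredients (Lemma \ref{lem_FA}, self-duality via the Killing form) are standard.
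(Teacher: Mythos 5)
Your proof is correct and follows the same overall architecture as the paper's proof: both combine Lemma~\ref{lem_FA}, Lemma~\ref{lem_tangent}, and Theorem~\ref{thm_equivariance}, with the Jacobian of $\varphi^*$ in cluster coordinates identified with the cohomological monodromy on $H^1_{\Ad\circ\rho}(\pi_1 S;\mathfrak{pgl}_n(\C))$. The genuine difference is in the duality step that returns from $H^1$ to $H_1$. You use the Kronecker pairing coming from the universal coefficient theorem over $\C$ together with self-duality of $\mathfrak{pgl}_n(\C)$ via the Killing form, giving $H^1_{\Ad\circ\rho}(S)\cong H_1^{\Ad\circ\rho}(S)^*$ directly and hence equality of characteristic polynomials of the two monodromies. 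The paper instead invokes Poincar\'e--Lefschetz duality to relate $H^1_{\Ad\circ\rho}(S)$ to $H_1^{\Ad\circ\rho}(S,\partial S)$, and then must separately argue, via the five-term exact sequence of the pair $(S,\partial S)$ and the fact that the boundary terms $H_1^{\Ad\circ\rho}(\partial S)$, $H_0^{\Ad\circ\rho}(\partial S)$ are dual and carry trivial monodromy, that $\varphi_*$ on $H_1^{\Ad\circ\rho}(S,\partial S)$ and on $H_1^{\Ad\circ\rho}(S)$ have the same characteristic polynomial. Your route therefore buys a shorter argument that avoids the exact-sequence step entirely; the paper's route keeps everything within the intersection-pairing framework \eqref{eq_duality} that it uses elsewhere in Section~3. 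One small imprecision in your write-up worth flagging is that the cohomological and homological monodromies involve a change of local system $\varphi^*(\Ad\circ\rho)\cong \Ad\circ\rho$ (conjugation by $\Ad\rho(t)$); the transpose relation holds because this identification is an isometry for the Killing form, a point you could state explicitly, and which is the precise content of the naturality you rightly single out at the end as the crux.
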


\begin{proof}
In the following the coefficients of all the twisted homology groups and all
the twisted cohomology groups are understood to be $\mathfrak{pgl}_n(\C)$.

It follows from Lemma \ref{lem_tangent} and Corollary \ref{cor_equivariance} that
the homomorphism
$\varphi^* \colon H_{\Ad \circ \rho}^1(\pi_1 S) \to H_{\Ad \circ \rho}^1(\pi_1 S)$
is presented by the matrix
$\left( \frac{\partial \varphi_*(y_j)}{\partial y_i} \right)$.
Since $H_{\Ad \circ \rho}^1(\pi_1 S)$ is isomorphic to
$H_{\Ad \circ \rho}^1(S)$ and since $H_{\Ad \circ \rho}^1(S)$ is isomorphic by
Poincar\'{e} duality to the dual of $H_1^{\Ad \circ \rho}(S, \partial S)$, the
homomorphism
$\varphi_* \colon H_1^{\Ad \circ \rho}(S, \partial S) \to H_1^{\Ad \circ \rho}(S, \partial S)$
is presented by the transpose of
$\left( \frac{\partial \varphi_*(y_j)}{\partial y_i} \right)$.
Thus by Lemma \ref{lem_FA} we only need to show that the homomorphisms
$\varphi_* \colon H_1^{\Ad \circ \rho}(S) \to H_1^{\Ad \circ \rho}(S)$ and
$\varphi_* \colon H_1^{\Ad \circ \rho}(S, \partial S) \to H_1^{\Ad \circ \rho}(S, \partial S)$
are equivalent to each other.

It follows from the duality induced by the intersection pairing
\eqref{eq_duality} that
$H_2^{\Ad \circ \rho}(S, \partial S) = H_0^{\Ad \circ \rho}(S) = 0$.
Hence the homology long exact sequence for the pair $(M, \partial M)$ gives the
following commutative diagram of exact sequences:
\[
\begin{CD}
0 @>>> H_1^{\Ad \circ \rho}(\partial S) @>>> H_1^{\Ad \circ \rho}(S) @>>> H_1^{\Ad \circ \rho}(S, \partial S) @>>> H_0^{\Ad \circ \rho}(\partial S) @>>> 0 \\
@. @| @V \varphi_* VV @V \varphi_* VV @| \\
0 @>>> H_1^{\Ad \circ \rho}(\partial S) @>>> H_1^{\Ad \circ \rho}(S) @>>> H_1^{\Ad \circ \rho}(S, \partial S) @>>> H_0^{\Ad \circ \rho}(\partial S) @>>> 0,
\end{CD}
\]
where it follows again from the duality induced by the intersection pairing
\eqref{eq_duality} that $H_0^{\Ad \circ \rho}(\partial S)$ is isomorphic to the
dual of $H_1^{\Ad \circ \rho}(\partial S)$.
Now it is a simple matter to check that
$\varphi_* \colon H_1^{\Ad \circ \rho}(S) \to H_1^{\Ad \circ \rho}(S)$ and
$\varphi_* \colon H_1^{\Ad \circ \rho}(S, \partial S) \to H_1^{\Ad \circ \rho}(S, \partial S)$
are equivalent, which completes the proof.
\end{proof}

The proof of the following theorem is now straightforward from Corollary
\ref{cor_L} and Theorem \ref{thm_CA}.

\begin{thm} \label{thm_CT}
Let $\gamma_1, \dots, \gamma_m$ be the boundary components of $S$, and let
$\varphi \in \Gamma_S$.
For a $(\gamma_1, \dots, \gamma_m)$-regular representation
$\rho \colon \pi_1 M_\varphi \to PGL_n(\C)$ satisfying
$H_0^{\Ad \circ \rho}(S; \mathfrak{pgl}_n(\C)) = 0$ and
$(y_1^0, \dots, y_l^0) \in \mathcal{X}_{T, n}^{\varphi^*}$, if
$\chi_{\rho|_{\pi_1 S}} = \chi_{(y_1^0, \dots, y_l^0)}$, then
\[ T_{(\gamma_1, \dots, \gamma_m), \rho}(M_\varphi) = \lim_{t \to 1} \frac{ \left. \det \left( t \left( \frac{\partial \varphi^*(y_j)}{\partial y_i} \right) - I \right) \right|_{(y_1, \dots, y_l) = (y_1^0, \dots, y_l^0)}}{(t-1)^{m(n-1)}}. \]
\end{thm}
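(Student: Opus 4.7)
The plan is to directly compose Corollary~\ref{cor_L} with Theorem~\ref{thm_CA}; the statement is essentially the chain of these two identities, so the work is mainly to confirm that all hypotheses line up.

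First I would verify the hypotheses of both ingredients. The $(\gamma_1,\dots,\gamma_m)$-regularity of $\rho$ together with $H_0^{\Ad\circ\rho}(S;\mathfrak{pgl}_n(\C))=0$ are exactly what Corollary~\ref{cor_L} requires, giving
\[ T_{(\gamma_1,\dots,\gamma_m),\rho}(M_\varphi) = \lim_{t\to 1}\frac{\tau_{\psi\otimes\Ad\circ\rho}(M_\varphi)}{(t-1)^{m(n-1)}}. \]
On the other hand, the vanishing of $H_0^{\Ad\circ\rho}(S;\mathfrak{pgl}_n(\C))$ together with the character identification $\chi_{\rho|_{\pi_1 S}}=\chi_{(y_1,\dots,y_l)}$ are precisely the hypotheses of Theorem~\ref{thm_CA}, which yields
\[ \tau_{\psi\otimes\Ad\circ\rho}(M_\varphi) = \det \left( t \left( \frac{\partial\varphi^*(y_j)}{\partial y_i} \right) - I \right). \]
Substituting this expression for the twisted Alexander polynomial into the numerator of the previous identity produces the stated formula.

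There is no real obstacle beyond this bookkeeping. The one subtlety worth mentioning is that the limit on the right-hand side is well defined: the factor $(t-1)^{m(n-1)}$ in the denominator is compensated by an identical order of vanishing in the numerator at $t=1$, which reflects the fact that, under the regularity assumption, the Jacobian $\left(\partial\varphi^*(y_j)/\partial y_i\right)$ has $1$ as an eigenvalue of multiplicity at least $m(n-1)$ coming from the boundary contributions. This is already implicit in the homological computation carried out in the proof of Theorem~\ref{thm_FT}, where the same denominator arises from the cokernel of $\psi_1'$, so no new argument is needed and the corollary follows immediately.
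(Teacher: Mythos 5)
Your proof is correct and is exactly the paper's own argument: the paper explicitly states that Theorem~\ref{thm_CT} is "straightforward from Corollary~\ref{cor_L} and Theorem~\ref{thm_CA}," which is precisely the composition you carry out. Your closing observation about the order of vanishing at $t=1$ is accurate and a worthwhile clarification, though the paper does not spell it out.
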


\begin{rem}
It follows from Theorems \ref{thm_holonomy} and \ref{thm_regular} that the assumptions of the above theorems are satisfied for a pseudo-Anosov $\varphi \in \Gamma_S$ and a holonomy representation of $M_\varphi$. 
\end{rem}

The advantage of our main theorems is that cluster variables naturally describe torsion invariants as functions on the moduli spaces of representations in a combinatorial way.
In fact, the rational function induced by the coefficients of the polynomial
\[ \det \left( t \left( \frac{\partial \varphi^*(y_j)}{\partial y_i} \right) - I \right) \]
or that by
\[  \lim_{t \to 1} \frac{ \det \left( t \left( \frac{\partial \varphi^*(y_j)}{\partial y_i} \right) - I \right)}{(t-1)^{m(n-1)}} \]
in the theorems can be algorithmically computed from the ideal triangulation $T$ and a sequence of flips representing $\varphi$, and now regarded as torsion functions on the moduli spaces.

\begin{rem}
In \cite{NTY} the cluster variables in $\mathcal{X}_{T, 2}$ are interpreted as the shape parameters of ideal tetrahedra of $M_\varphi$, and the volumes are also explicitly computed from the cluster variables.
This is one advantage with the cluster variables to parametrize representations.
For example, this is very useful for identifying the complete holonomy representation.
\end{rem}

The following question concerning the condition on the cluster variables that ensures $(\gamma_1, \dots, \gamma_m)$-regularity naturally arises from Theorem \ref{thm_CT}:
\begin{q}
Let $\gamma_1, \dots, \gamma_m$ be the boundary components of $S$, and let
$\varphi \in \Gamma_S$.
For a representation $\rho \colon \pi_1 M_\varphi \to PGL_n(\C)$ and
$(y_1^0, \dots, y_l^0) \in \mathcal{X}_{T, n}^{\varphi^*}$, if
$\chi_{\rho|_{\pi_1 S}} = \chi_{(y_1^0, \dots, y_l^0)}$, and if
\[ \lim_{t \to 1} \frac{ \left. \det \left( t \left( \frac{\partial \varphi^*(y_j)}{\partial y_i} \right) - I \right) \right|_{(y_1, \dots, y_l) = (y_1^0, \dots, y_l^0)}}{(t-1)^{m(n-1)}}. \]
is nonzero, then is $\rho$ a $(\gamma_1, \dots, \gamma_m)$-regular representation satisfying $H_0^{\Ad \circ \rho}(S; \mathfrak{pgl}_n(\C)) = 0$?
\end{q}

\subsection{Examples}

Finally, we demonstrate our theory for $\varphi = LR$ (the figure eight knot complement) and for $\varphi = LLR$ in the case of $n = 3$.

Let $S$ be a one-holed torus, and we identify $\overline{S}$ with $\R^2 / \Z^2$
so that the marked point corresponds to the integral points of $\R^2$.
The mapping class group $\Gamma_S = SL_2(\Z)$ is generated by the matrices
\[ L =
\begin{pmatrix}
1 & 0 \\
1 & 1
\end{pmatrix},
\quad R =
\begin{pmatrix}
1 & 1 \\
0 & 1
\end{pmatrix}.
\]
We set $\varphi = LR$, and then the mapping torus $M_\varphi$ is known to be
homeomorphic to the figure eight knot complement.
We consider an ideal triangulation $T$ defined by the lines $x = 0$, $x = y$,
$y = 0$ with respect to the standard coordinates $(x, y)$ of $\R^2$.
Then the quiver $Q_{T, 3}$ and the coordinates
$(y_1, y_2, y_3, y_4, y_5, y_6, y_7, y_8) \in \mathcal{X}_{T, 3}$ is given as
in Figure \ref{fig_torus}.

\begin{figure}[h]
\centering
\includegraphics[width=8cm, clip]{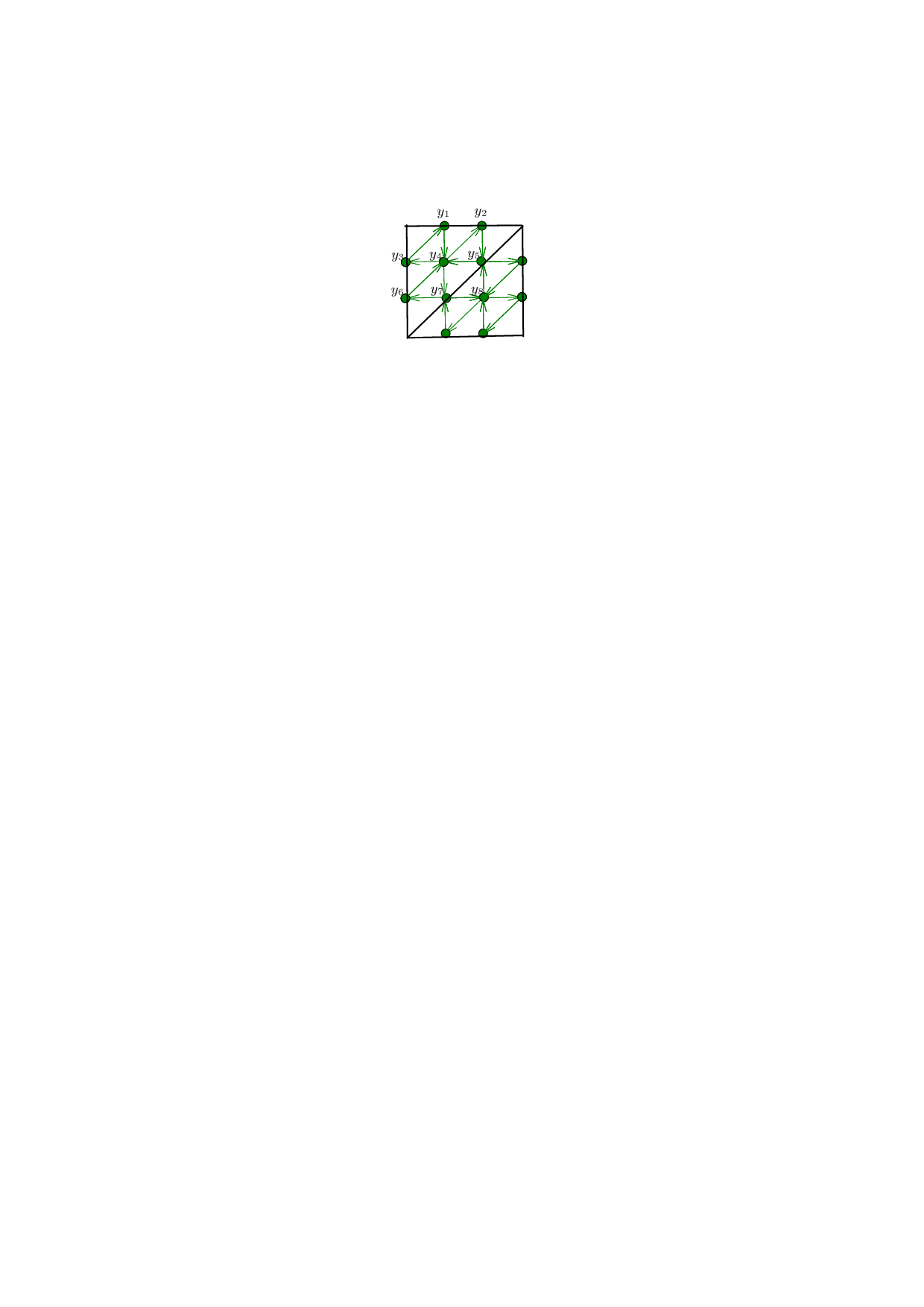}
\caption{One-holed torus $\overline{S}$ with the quiver $Q_{T, 3}$}
\label{fig_torus}
\end{figure}

A computation implies that
$L^*, R^* \colon \mathcal{X}_{T, 3} \to \mathcal{X}_{T, 3}$ are described as
follows:
\begin{align*}
L^*(y_1) = &\frac{(1+y_1) (1+y_2+y_2 y_4+y_1 y_2 y_4) y_7}{1+y_2} \\
L^*(y_2) = &\frac{(1+y_2) y_5 (1+y_1+y_1 y_8+y_1 y_2 y_8)}{1+y_1} \\
L^*(y_3) = &\frac{y_1^2 (1+y_2) y_3 y_8}{(1+y_1) (1+y_1+y_1 y_8+y_1 y_2 y_8)} \\
L^*(y_4) = &\frac{(1+y_2+y_2 y_4+y_1 y_2 y_4) y_8}{1+y_1+y_1 y_8+y_1 y_2 y_8} \\
L^*(y_5) = &\frac{1+y_1}{y_1 (1+y_2) y_8} \\
L^*(y_6) = &\frac{(1+y_1) y_2^2 y_4 y_6}{(1+y_2) (1+y_2+y_2 y_4+y_1 y_2 y_4)} \\
L^*(y_7) = &\frac{1+y_2}{(1+y_1) y_2 y_4} \\
L^*(y_8) = &\frac{y_4 (1+y_1+y_1 y_8+y_1 y_2 y_8)}{1+y_2+y_2 y_4+y_1 y_2 y_4}
\end{align*}
\begin{align*}
R^*(y_1) = &\frac{y_1 (1+y_3) (1+y_6+y_6 y_8+y_3 y_6 y_8)}{1+y_6} \\
R^*(y_2) = &\frac{y_2 (1+y_6) (1+y_3+y_3 y_4+y_3 y_4 y_6)}{1+y_3} \\
R^*(y_3) = &\frac{y_3^2 y_4 y_5 (1+y_6)}{(1+y_3) (1+y_3+y_3 y_4+y_3 y_4 y_6)} \\
R^*(y_4) = &\frac{y_4 (1+y_6+y_6 y_8+y_3 y_6 y_8)}{1+y_3+y_3 y_4+y_3 y_4 y_6} \\
R^*(y_5) = &\frac{1+y_3}{y_3 y_4 (1+y_6)} \\
R^*(y_6) = &\frac{(1+y_3) y_6^2 y_7 y_8}{(1+y_6) (1+y_6+y_6 y_8+y_3 y_6 y_8)} \\
R^*(y_7) = &\frac{1+y_6}{(1+y_3) y_6 y_8} \\
R^*(y_8) = &\frac{(1+y_3+y_3 y_4+y_3 y_4 y_6) y_8}{1+y_6+y_6 y_8+y_3 y_6 y_8}
\end{align*}

Combining them, we compute
$\varphi^* = R^* \circ L^* \colon \mathcal{X}_{T, 3} \to \mathcal{X}_{T, 3}$
as follows:
\begin{align*}
\varphi^*(y_1) = &(y_7 (1 + 2 y_1 + y_1^2 + y_1 y_8 + y_1^2 y_8 + y_1 y_2 y_8 + y_1^2 y_2 y_8 + y_1^2 y_3 y_8 + y_1^2 y_2 y_3 y_8) (1 + 2 y_2 + y_2^2 \\
&+ 2 y_2 y_4 + 2 y_1 y_2 y_4 + 2 y_2^2 y_4 + 2 y_1 y_2^2 y_4 + y_2^2 y_4^2 + 2 y_1 y_2^2 y_4^2 + y_1^2 y_2^2 y_4^2 + y_2^2 y_4 y_6 + y_1 y_2^2 y_4 y_6 \\
&+ y_2^2 y_4^2 y_6 + 2 y_1 y_2^2 y_4^2 y_6 + y_1^2 y_2^2 y_4^2 y_6 + y_1 y_2^2 y_4^2 y_6 y_8 + y_1^2 y_2^2 y_4^2 y_6 y_8 + y_1^2 y_2^2 y_3 y_4^2 y_6 y_8)) \\
&/((1 + 2 y_2 + y_2^2 + y_2 y_4 + y_1 y_2 y_4 + y_2^2 y_4 + y_1 y_2^2 y_4 + y_2^2 y_4 y_6 + y_1 y_2^2 y_4 y_6) (1 + y_1 + y_1 y_8 \\
&+ y_1 y_2 y_8)) \\
\varphi^*(y_2) = &(y_5 (1 + 2 y_2 + y_2^2 + y_2 y_4 + y_1 y_2 y_4 + y_2^2 y_4 + y_1 y_2^2 y_4 + y_2^2 y_4 y_6 + y_1 y_2^2 y_4 y_6) (1 + 2 y_1 + y_1^2 \\
&+ 2 y_1 y_8 + 2 y_1^2 y_8 + 2 y_1 y_2 y_8 + 2 y_1^2 y_2 y_8 + y_1^2 y_3 y_8 + y_1^2 y_2 y_3 y_8 + y_1^2 y_8^2 + 2 y_1^2 y_2 y_8^2 + y_1^2 y_2^2 y_8^2 \\
&+ y_1^2 y_3 y_8^2 + 2 y_1^2 y_2 y_3 y_8^2 + y_1^2 y_2^2 y_3 y_8^2 + y_1^2 y_2 y_3 y_4 y_8^2 + y_1^2 y_2^2 y_3 y_4 y_8^2 + y_1^2 y_2^2 y_3 y_4 y_6 y_8^2)) \\
&/((1 + y_2 + y_2 y_4 + y_1 y_2 y_4) (1 + 2 y_1 + y_1^2 + y_1 y_8 + y_1^2 y_8 + y_1 y_2 y_8 + y_1^2 y_2 y_8 + y_1^2 y_3 y_8 \\
&+ y_1^2 y_2 y_3 y_8)) \\
\varphi^*(y_3) = &(y_1^3 y_3^2 (1 + 2 y_2 + y_2^2 + y_2 y_4 + y_1 y_2 y_4 + y_2^2 y_4 + y_1 y_2^2 y_4 + y_2^2 y_4 y_6 + y_1 y_2^2 y_4 y_6) y_8^2) \\
&/((1 + 2 y_1 + y_1^2 + y_1 y_8 + y_1^2 y_8 + y_1 y_2 y_8 + y_1^2 y_2 y_8 + y_1^2 y_3 y_8 + y_1^2 y_2 y_3 y_8) (1 + 2 y_1 + y_1^2 \\
&+ 2 y_1 y_8 + 2 y_1^2 y_8 + 2 y_1 y_2 y_8 + 2 y_1^2 y_2 y_8 + y_1^2 y_3 y_8 + y_1^2 y_2 y_3 y_8 + y_1^2 y_8^2 + 2 y_1^2 y_2 y_8^2 + y_1^2 y_2^2 y_8^2 \\
&+ y_1^2 y_3 y_8^2 + 2 y_1^2 y_2 y_3 y_8^2 + y_1^2 y_2^2 y_3 y_8^2 + y_1^2 y_2 y_3 y_4 y_8^2 + y_1^2 y_2^2 y_3 y_4 y_8^2 + y_1^2 y_2^2 y_3 y_4 y_6 y_8^2))
\end{align*}

\begin{align*}
\varphi^*(y_4) = &(y_8 (1 + y_1 + y_1 y_8 + y_1 y_2 y_8) (1 + 2 y_2 + y_2^2 + 2 y_2 y_4 + 2 y_1 y_2 y_4 + 2 y_2^2 y_4 + 2 y_1 y_2^2 y_4 + y_2^2 y_4^2 \\
&+ 2 y_1 y_2^2 y_4^2 + y_1^2 y_2^2 y_4^2 + y_2^2 y_4 y_6 + y_1 y_2^2 y_4 y_6 + y_2^2 y_4^2 y_6 + 2 y_1 y_2^2 y_4^2 y_6 + y_1^2 y_2^2 y_4^2 y_6 + y_1 y_2^2 y_4^2 y_6 y_8 \\
&+ y_1^2 y_2^2 y_4^2 y_6 y_8 + y_1^2 y_2^2 y_3 y_4^2 y_6 y_8)) \\
&/((1 + y_2 + y_2 y_4 + y_1 y_2 y_4) (1 + 2 y_1 + y_1^2 + 2 y_1 y_8 + 2 y_1^2 y_8 + 2 y_1 y_2 y_8 + 2 y_1^2 y_2 y_8 + y_1^2 y_3 y_8 \\
&+ y_1^2 y_2 y_3 y_8 + y_1^2 y_8^2 + 2 y_1^2 y_2 y_8^2 + y_1^2 y_2^2 y_8^2 + y_1^2 y_3 y_8^2 + 2 y_1^2 y_2 y_3 y_8^2 + y_1^2 y_2^2 y_3 y_8^2 + y_1^2 y_2 y_3 y_4 y_8^2 \\
&+ y_1^2 y_2^2 y_3 y_4 y_8^2 + y_1^2 y_2^2 y_3 y_4 y_6 y_8^2)) \\
\varphi^*(y_5) = &((1 + y_1 + y_1 y_8 + y_1 y_2 y_8) (1 + 2 y_1 + y_1^2 + y_1 y_8 + y_1^2 y_8 + y_1 y_2 y_8 + y_1^2 y_2 y_8 + y_1^2 y_3 y_8 \\
&+ y_1^2 y_2 y_3 y_8)) \\
&/(y_1^2 y_3 (1 + 2 y_2 + y_2^2 + y_2 y_4 + y_1 y_2 y_4 + y_2^2 y_4 + y_1 y_2^2 y_4 + y_2^2 y_4 y_6 + y_1 y_2^2 y_4 y_6) y_8^2) \\
\varphi^*(y_6) = &(y_2^3 y_4^2 y_6^2 (1 + 2 y_1 + y_1^2 + y_1 y_8 + y_1^2 y_8 + y_1 y_2 y_8 + y_1^2 y_2 y_8 + y_1^2 y_3 y_8 + y_1^2 y_2 y_3 y_8)) \\
&/((1 + 2 y_2 + y_2^2 + y_2 y_4 + y_1 y_2 y_4 + y_2^2 y_4 + y_1 y_2^2 y_4 + y_2^2 y_4 y_6 + y_1 y_2^2 y_4 y_6) (1 + 2 y_2 + y_2^2 \\
&+ 2 y_2 y_4 + 2 y_1 y_2 y_4 + 2 y_2^2 y_4 + 2 y_1 y_2^2 y_4 + y_2^2 y_4^2 + 2 y_1 y_2^2 y_4^2 + y_1^2 y_2^2 y_4^2 + y_2^2 y_4 y_6 + y_1 y_2^2 y_4 y_6 \\
&+ y_2^2 y_4^2 y_6 + 2 y_1 y_2^2 y_4^2 y_6 + y_1^2 y_2^2 y_4^2 y_6 + y_1 y_2^2 y_4^2 y_6 y_8 + y_1^2 y_2^2 y_4^2 y_6 y_8 + y_1^2 y_2^2 y_3 y_4^2 y_6 y_8)) \\
\varphi^*(y_7) = &((1 + y_2 + y_2 y_4 + y_1 y_2 y_4) (1 + 2 y_2 + y_2^2 + y_2 y_4 + y_1 y_2 y_4 + y_2^2 y_4 + y_1 y_2^2 y_4 + y_2^2 y_4 y_6 \\
&+ y_1 y_2^2 y_4 y_6)) \\
&/(y_2^2 y_4^2 y_6 (1 + 2 y_1 + y_1^2 + y_1 y_8 + y_1^2 y_8 + y_1 y_2 y_8 + y_1^2 y_2 y_8 + y_1^2 y_3 y_8 + y_1^2 y_2 y_3 y_8)) \\
\varphi^*(y_8) = &(y_4 (1 + y_2 + y_2 y_4 + y_1 y_2 y_4) (1 + 2 y_1 + y_1^2 + 2 y_1 y_8 + 2 y_1^2 y_8 + 2 y_1 y_2 y_8 + 2 y_1^2 y_2 y_8 + y_1^2 y_3 y_8 \\
&+ y_1^2 y_2 y_3 y_8 + y_1^2 y_8^2 + 2 y_1^2 y_2 y_8^2 + y_1^2 y_2^2 y_8^2 + y_1^2 y_3 y_8^2 + 2 y_1^2 y_2 y_3 y_8^2 + y_1^2 y_2^2 y_3 y_8^2 + y_1^2 y_2 y_3 y_4 y_8^2 \\
&+ y_1^2 y_2^2 y_3 y_4 y_8^2 + y_1^2 y_2^2 y_3 y_4 y_6 y_8^2)) \\
&/((1 + y_1 + y_1 y_8 + y_1 y_2 y_8) (1 + 2 y_2 + y_2^2 + 2 y_2 y_4 + 2 y_1 y_2 y_4 + 2 y_2^2 y_4 + 2 y_1 y_2^2 y_4 + y_2^2 y_4^2 \\
&+ 2 y_1 y_2^2 y_4^2 + y_1^2 y_2^2 y_4^2 + y_2^2 y_4 y_6 + y_1 y_2^2 y_4 y_6 + y_2^2 y_4^2 y_6 + 2 y_1 y_2^2 y_4^2 y_6 + y_1^2 y_2^2 y_4^2 y_6 + y_1 y_2^2 y_4^2 y_6 y_8 \\
&+ y_1^2 y_2^2 y_4^2 y_6 y_8 + y_1^2 y_2^2 y_3 y_4^2 y_6 y_8))
\end{align*}

The space of solutions of the equations $\varphi^*(y_i) = y_i$ for all $i$ parametrizes $\mathcal{X}_{S, 3}^{\varphi^*}$ and $X_{S, 3}^{\varphi^*}$.
Here we emphasize that the parametrization makes sense by using the labeling change $\sigma$ and by Proposition \ref{prop_C} proved in the paper.

A solution is given by
\begin{align*}
y_1^0 = y_2^0 &= \frac{-1-\sqrt{-3}}{2} \\
y_3^0 = y_4^0 = y_6^0 = y_8^0 &= 1 \\
y_5^0 = y_7^0 &= \frac{-1+\sqrt{-3}}{2}.
\end{align*}
This solution can be found, for example, by using the arguments in the proofs
of Theorem \ref{thm_holonomy} and Corollary \ref{cor_solution}.
First we find an element
$\left( 1, \frac{-1+\sqrt{-3}}{2}, \frac{-1-\sqrt{-3}}{2} \right) \in \mathcal{X}_{T, 2}^{\varphi^*}$
corresponding to the character of a holonomy representation of the hyperbolic
manifold $M_\varphi$ as in \cite[Section 5.1]{NTY}.
Then the above element of $\mathcal{X}_{T, 3}^{\varphi^*}$ is the image of the
map $\mathcal{X}_{T, 2} \to \mathcal{X}_{T, 3}$ in the proof of Corollary
\ref{cor_solution}.
In fact, it corresponds to the character of the composition of a holonomy
representation and the homomorphism $PGL_2(\C) \to PGL_3(\C)$ induced by an
irreducible representation $PGL_2(\C) \to SL_3(\C)$. 

By Theorem \ref{thm_CA} we obtain the twisted Alexander polynomial associated
to the solution as:
\[ \left. \det \left( t \left( \frac{\partial \varphi^*(y_j)}{\partial y_i} \right) - I \right) \right|_{(y_1, \dots, y_l) = (y_1^0, \dots, y_l^0)} = (t-1)^2 (t^2-5t+1) (t^4-9t^3+44t-9t+1). \]
By Theorem \ref{thm_CT} we also obtain the non-acyclic torsion associated to
the solution as:
\begin{align*}
\lim_{t \to 1} \frac{ \left. \det \left( t \left( \frac{\partial \varphi^*(y_j)}{\partial y_i} \right) - I \right) \right|_{(y_1, \dots, y_l) = (y_1^0, \dots, y_l^0)}}{(t-1)^2} &= \lim_{t \to 1} (t^2-5t+1) (t^4-9t^3+44t-9t+1) \\
&= -84.
\end{align*}

Next, we set $\varphi' = LLR$.
Similarly, we can first compute $\varphi'^* = R^* \circ L^* \circ L^* \colon \mathcal{X}_{T, 3} \to \mathcal{X}_{T, 3}$ and the equations $\varphi'^*(y_i) = y_i$ for all $i$ defining $\mathcal{X}_{T, 3}^{\varphi'^*}$.
Then the following solution of the equations corresponding to the character of a holonomy representation of $M_{\varphi'^*}$ is found as follows:
\begin{align*}
y_1^0 = y_2^0 &= \frac{-3-\sqrt{-7}}{2} \\
y_3^0 = y_6^0 &= \frac{5+\sqrt{-7}}{8} \\
y_4^0 = y_8^0 &= 1 \\
y_5^0 = y_7^0 &= \frac{-1+\sqrt{-7}}{4}.
\end{align*}
Again by Theorems \ref{thm_CA} and \ref{thm_CT}, we obtain the twisted Alexander polynomial and the non-acyclic torsion associated to the solution as:
\begin{align*}
\left. \det \left( t \left( \frac{\partial \varphi'^*(y_j)}{\partial y_i} \right) - I \right) \right|_{(y_1, \dots, y_l) = (y_1^0, \dots, y_l^0)} = &(t-1)^2 (t^6+8 i \sqrt{7} t^5-22 t^5-80 i \sqrt{7} t^4+227 t^4 +208 i \sqrt{7} t^3 \\
&-1420 t^3-80 i \sqrt{7} t^2+227 t^2+8 i \sqrt{7} t-22 t+1), \\
\lim_{t \to 1} \frac{ \left. \det \left( t \left( \frac{\partial \varphi'^*(y_j)}{\partial y_i} \right) - I \right) \right|_{(y_1, \dots, y_l) = (y_1^0, \dots, y_l^0)}}{(t-1)^2} = &\lim_{t \to 1} (t^6+8 i \sqrt{7} t^5-22 t^5-80 i \sqrt{7} t^4+227 t^4+208 i \sqrt{7} t^3 \\
&-1420 t^3-80 i \sqrt{7} t^2+227 t^2+8 i \sqrt{7} t-22 t+1) \\
= &-1008 + 64 \sqrt{-7}.
\end{align*}

In the above computations on torsion invariants we specify solutions as $(y_1, \dots, y_l) = (y_1^0, \dots, y_l^0)$ for simplicity of the expressions, but note that without any specification of solutions our formulas give the torsion functions with coefficients in $(y_1, \dots, y_8) \in \mathcal{X}_{T, 3}$.


\end{document}